\tikzset{ font={\fontsize{9pt}{12}\selectfont}}
\newcommand{\overbar}[1]{\mkern 1.5mu\overline{\mkern-1.5mu#1\mkern-1.5mu}\mkern 1.5mu}
\newtheorem*{thmA}{Theorem A}
\newtheorem*{thmB}{Theorem B}
\theoremstyle{plain}
\newtheorem{theorem}{Theorem}[section]
\newtheorem{lemma}[theorem]{Lemma}
\newtheorem{corollary}[theorem]{Corollary}
\newtheorem{prop}[theorem]{Proposition}
\newtheorem{remark}{Remark}
\newcommand{\rs}{\hat{\mathbb{C}}}
\newcommand{\immo}{A_{n,\alpha}^{*}(1)}
\newcommand{\mmodoi}{A_{B_a}(\infty)}
\newcommand{\immodoi}{A_{B_a}^{*}(\infty)}
\newcommand{\immR}{A_{R_{n,\alpha}}^{*}(\infty)}
\newcommand{\mmo}{A_{n,\alpha}(1)}
\newcommand{\jn}{O_{n, \alpha}}
\newcommand{\jdoi}{B_{ a}}
\newcommand{\jndoi}{O_{2, \alpha}}
\newcommand{\jtrei}{O_{3, \alpha}}
\newcommand{\jntrei}{R_{n,\alpha}}
\newcommand{\jnconj}{R_{n,\alpha}}
\newcommand{\jnconju}{R_{n,\alpha}}
\newcommand{\jnconjt}{T_{n,\alpha}}
\newcommand{\jnt}{T_{n,\alpha}}
\newcommand{\jnc}{O_{n, \alpha, c}}
\newcommand{\jnmi}{O_{n, \alpha, -1}}
\newsavebox{\savepar}
\begin{document}

\title{Newton-like components in the Cebyshev-Halley family of degree $n$ polynomials}

\date{\today}

\author{Dan Paraschiv}
\address{Departament de Matem\`atiques i Inform\`atica at Universitat de Barcelona and Barcelona Graduate School of Mathematics, 08007 Barcelona, Catalonia.}
\email{dan.paraschiv@ub.edu}

\thanks{The author was  supported by the Spanish government grant FPI PRE2018-086831. The author was also supported by PID2020-118281GB-C32.}\

\begin{abstract}
We study the Cebyshev-Halley methods applied to the family of polynomials $f_{n,c}(z)=z^n+c$, for $n\ge 2$ and $c\in \mathbb{C}^{*}$. We prove the existence of parameters such that the immediate basins of attraction corresponding to the roots of unity are infinitely connected. We also prove that, for $n \ge 2$, the corresponding dynamical plane contains a connected component of the Julia set, which is a quasiconformal deformation of the Julia set of the map obtained by applying Newton's method to $f_{n,-1}$.
\vspace{0.5cm}

\textit{Keywords: Complex dynamics, rational functions, Julia set, Fatou component, Cebyshev-Halley methods, Newton's method.}

\end{abstract}

\maketitle

\section{Introduction}

Numerical methods have been extensively used to give accurate approximations of the solutions of systems of non-linear equations. Those equations or systems of equations correspond to a wide source of scientific models from biology to engineering and from economics to social sciences, and so their solutions are the cornerstone of applied mathematics. One of the most sets families of numerical methods are the so called 
root-finding algorithms; that is, iterative methods which asymptotically converge to the zeros (or some of the zeros) of the non linear equation, say $g(z)=0$. Although $g$ could in general describe an arbitrary high dimensional problem, in this paper we focus on the one dimensional case, i.e. $g:\mathbb C \to \mathbb C$.

The universal and most studied root-finding algorithm is known as {\it Newton’s method}. 
If $g$ is holomorphic, the Newton’s method applied to $g$ is the iterative root finding algorithm defined as follows 
$$
z_{n+1} = z_n - \frac{g(z_n)}{g^{\prime}(z_n)}, \ \ z_0\in \mathbb C.
$$
It is well known that if $z_0\in \mathbb C$ is chosen close enough to one of the solutions of the equation $g(z)=0$, say $\alpha$, then the sequence $\{z_n=g^n(z_0)\}_{n\geq 0}$ has the limit $\alpha$ when $n$ tends to $\infty$. Moreover the speed of (local) convergence is generically quadratic (see, for instance,  \cite{AmatBusqRev}). It was Cayley (see \cite{Cayley}) the first to consider Newton's method as a (holomorphic) dynamical system, that is studying the convergence of these sequences for all possible seeds $x_0\in \mathbb C$ at once, under the assumption that $g$ was a degree 2 or 3 polynomial. This was known as {\it Cayley's problem}.

Many authors have studied alternative iterative methods having, for instance, a better local speed of convergence. Two of the best known root-finding algorithms of order of convergence $3$ are {\it Chebyshev's method} and {\it Halley's method} (see \cite{AmatBusqRev}). They are included in the {\it Chebyshev-Halley family} of root-finding algorithms, which was introduced in \cite{CTV} (see also \cite{AmatBusqAdv}), and is defined as follows. Let $g$ be a holomorphic map. Then
\begin{equation}z_{n+1}=z_n-\left( 1+\frac{1}{2}\,\frac{L_g(z_n)}{1-\alpha L_g(z_n)} \right)\frac{g(z_n)}{g'(z_n)},\end{equation}
where $\alpha \in [ 0, \, 1]$ and $L_g(z)=\frac{g(z)g"(z)}{(g'(z))^2}$. Notice that in a real setting, it suffices for $g$ to be a doubly differentiable function such that $g''(x)$ is continuous. 

For $\alpha=0$, we have Chebyshev's method and for $\alpha=\frac{1}{2}$ Halley's method. As $\alpha$ tends to $\infty$, the Cebyshev-Halley algorithms tend to Newton's method. The main goal of the paper is to show that the unbounded connected component of the Julia set of the Chebyshev-Halley maps applied to $z^{n}-c$ (for large enough $\alpha$) is homeomorphic to the Julia set of the map obtained by applying Newton's method to $z^n -1$.

Next, we give a brief introduction of complex dynamics for holomorphic maps defined over the Riemann sphere, that is, rational maps. For a more detailed description of the topic, see, for instance, \cite{Bear} and \cite{Mi1}. Let $Q:\rs \to \rs$ be a rational map of degree $d \ge 2$. A point $z_0 \in \rs$ is called a \textit{fixed point} if $Q(z_0)=z_0$. The \textit{multiplier} $\lambda$ of the fixed point $z_0$ is $\lambda\coloneqq Q' (z_0)$. If $|\lambda|<1$, then $z_0$ is an \textit{attracting fixed point}. If $\lambda=0$, we say that $z_0 $ is a {\it superattracting fixed point}. The \textit{basin of attraction} of an attracting fixed point $z_0$ is the set $A_Q (z_0)= \{ z\in \rs | \lim_{n\to \infty} Q^{n}(z)=z_0\}$. We denote by $A_Q^{*}(z_0)$ the connected component of the basin of attraction which contains $z_0$, also known as the \textit{immediate basin of attraction} of $z_0$. Any immediate basin of attraction contains at least one critical point, that is, a point $c\in \rs$ such that $Q'(c)=0$.

The Fatou set $F(Q)$ is defined as the set of points of normality. A point $z\in \rs$ is said to be normal if the family $\{Q^n\}_{n\ge 1}$ is normal for some neighborhood $U$ of $z$. A connected component of the Fatou set is called a Fatou component. The complement of the Fatou set is called the Julia set, denoted by $J(Q)$. Both Fatou and Julia sets are dinamically invariant. The behaviour of $Q$ on its corresponding Julia set $J(Q)$ is chaotic. Moreover, the Julia set $J(Q)$ is non-empty. 

Root-finding algorithms are a natural topic for complex dynamics. In particular, maps obtained by applying Newton's method to polynomials are a much studied topic (see \cite{Shi0}, \cite{HSS}, \cite{Ta}, \cite{BFJK}). It is proven in \cite{Shi0} that the Julia set of such maps is connected, so all Fatou components are simply connected.

Previously, Campos, Canela, and Vindel have studied the Cebyshev-Halley family applied to $f_{n,c}(z)=z^n+c, \,c \in \mathbb{C}^{*}$ (see \cite{CCV}, \cite{CCV1}). The maps obtained by applying the Cebyshev-Halley family to $f_{n,c}$ are all conjugated to the map obtained by applying the Cebyshev-Halley family to $f_n (z)\coloneqq f_{n, -1}(z)=z^n-1$ (see \thref{jordica}). By applying the Cebyshev-Halley method to $f_n(z)=z^n-1$ we obtain the map:
\begin{equation}
\label{exprjn}
\jn(z)=\frac{(1-2\alpha)(n-1)+(2-4\alpha-4n+6\alpha n-2\alpha n^2)z^n+(n-1)(1-2\alpha -2n +2\alpha n )z^{2n}}{2nz^{n-1}(\alpha(1-n)+(-\alpha -n+\alpha n) z^n)}.\end{equation}
The map $\jn$ has degree $2n$ and it has $4n-2$ critical points, counting multiplicity. The point $z=0$ is a critical point of multiplicity $n-2$, which is mapped to the fixed point $z=\infty$. The $n$-th roots of the unity are superattracting fixed points of local degree $3$, and therefore, they have multiplicity $2$ as critical points. This leaves $n$ free critical points. They are given by
\begin{equation}
\label{exprcpjn}
c_{n, \alpha, \xi}=\xi \left( \frac{\alpha (n-1)^2(2\alpha-1)}{n(2n-1)-\alpha(4n-1)(n-1)+2\alpha^2(n-1)^2}\right)^{\frac{1}{n}},
 \end{equation}
where $\xi^n=1$. This family is symmetric with respect to rotation by the $n$-th root of the unity (see \thref{rotationwithn}). This symmetry ties the orbits of the $n$ free critical points, so the family $\jn$ has only one degree of freedom (see Figure \ref{fig:Param}).
\begin{figure}
\includegraphics[width=\textwidth]{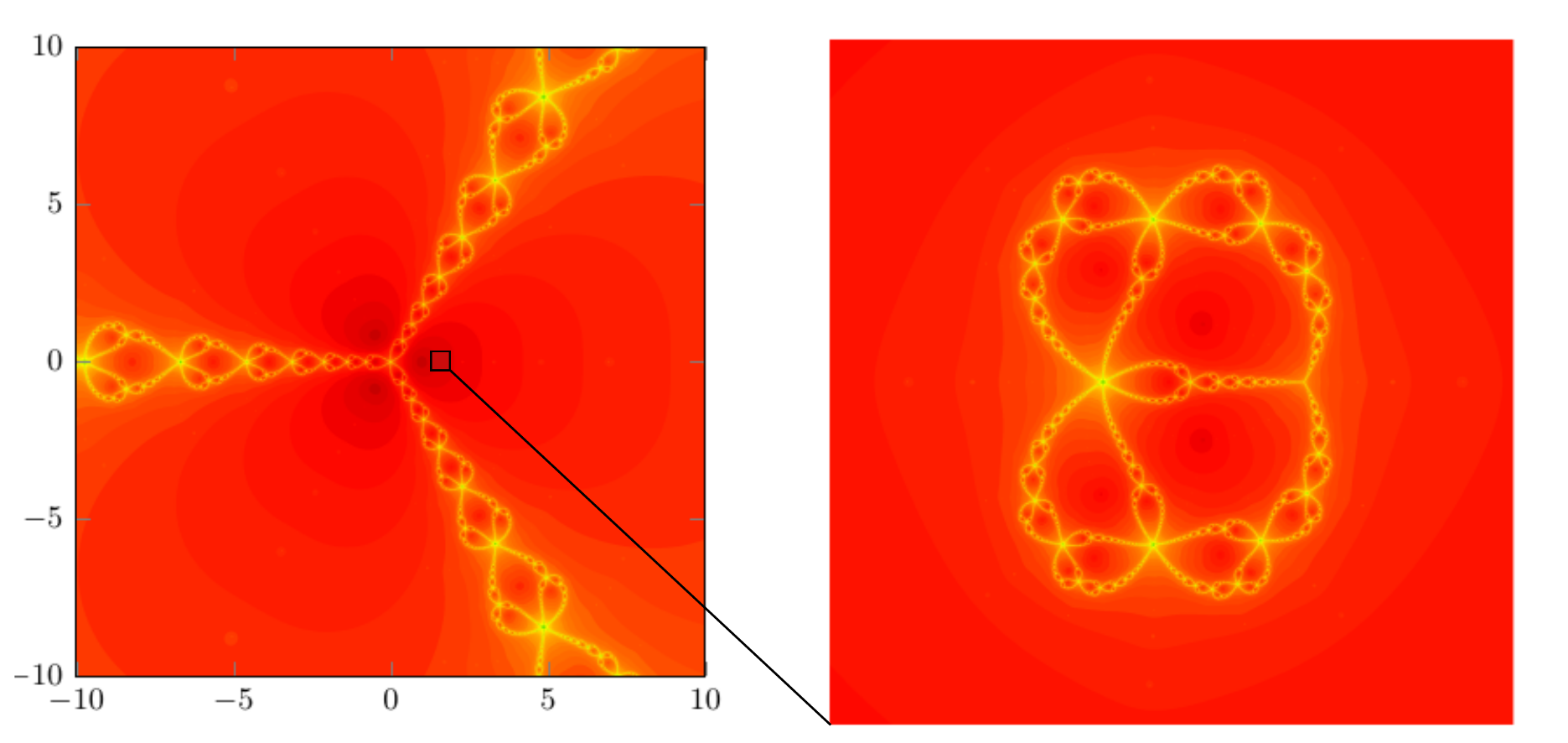}
\caption{Left figure illustrates the dynamical plane of $\jn$ for $n=3$ and $\alpha=10$. In the right figure (which shows $z\in \mathbb{C}$ such that $\textrm{Re}(z) \in [1.620; \,1.623]$ and $\textrm{Im}(z) \in[-0.0015; \, 0.0015]$ in the same dynamical plane), we can see a component of the Julia set which lies in $A^{*}(1)$.}
	\label{fig:planparam}
\end{figure}

In \cite{CCV1}, the authors studied in detail the topology of the immediate basins of attraction of the fixed points of $\jn(z)$ given by the $n-$th root of unity, that is, the zeros of $f_n(z)$. In what follows we refer to these basins as $$A_{n,\alpha}(\xi)\coloneqq A_{\jn}(\xi) \, \Big[ A_{n, \alpha}^{*}(\xi)\coloneqq A_{\jn}^{*}(\xi)\Big],$$ where $\xi^n=1$. For one particular case, the immediate basins of attraction are infinitely connected (see Figure \ref{fig:planparam}). We study the Julia set of $\jn$ for this particular case and relate it to the Julia set of the map obtained by applying Newton's method to $f_n$. We realise this using a quasiconformal surgery construction, which erases the holes in the immediate basins of attraction. The construction is a simpler case of one in \cite{McM1}. However, realizing the surgery is still needed, as we prove the uniqueness of the resulted quasiconformal map, to show that the quasirational map presents the necessary symmetries and is precisely $N_{f_n}$.

\begin{thmA}

Fix $n\ge 2$ and assume that $A_{n,\alpha}^{*}(1)$ is infinitely connected for some $\alpha \in \mathbb{C}$. Then there exists an invariant Julia component $\Pi$ (which contains $z=\infty$) which is a quasiconformal copy of the Julia set of $N_{f_n}$, where $N_{f_n}$ is the map obtained by applying Newton's method to the polynomial $f_n(z)=z^n-1$.
\end{thmA}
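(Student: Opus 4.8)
The strategy is a quasiconformal surgery that fills in the holes of the basins of the roots of unity, turning $\jn$ into a quasiregular map which, after straightening, is precisely $N_{f_n}$. First I would extract from \cite{CCV1} the following picture, valid in the infinitely connected case: the Fatou set of $\jn$ is the union of the basins $\mmox$, $\xi^n=1$; each immediate basin $A_{n,\alpha}^{*}(\xi)$ contains exactly one free critical point $c_{n,\alpha,\xi}$; the rotation $z\mapsto e^{2\pi i/n}z$ conjugates $\jn$ to itself and permutes the basins cyclically; and $z=\infty$ is a repelling fixed point, hence lies in $J(\jn)$. Let $\Pi$ be the Julia component containing $\infty$. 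Since $\jn(\Pi)$ is connected, contained in $J(\jn)$, and contains $\jn(\infty)=\infty$, one gets $\jn(\Pi)=\Pi$. The combinatorial input I need is that the holes of the basins form a countable, $\jn$-nested family of Jordan domains whose closures are disjoint from $\Pi$, and that of the poles of $\jn$ only the pole $z=0$ (of order $n-1$, with $\jn(0)=\infty$) lies on $\Pi$, the other $n$ poles lying inside holes.

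Following a simplified case of the construction in \cite{McM1}, I would then replace $\jn$ on its Fatou set by a model making each $A_{n,\alpha}^{*}(\xi)$ simply connected with centre of local degree $2$: around each $\xi$ one glues in a degree-$2$ power map in Böttcher coordinates, inside the holes one glues in trivial ``filling'' dynamics, and one interpolates quasiconformally in thin collars where the model and $\jn$ disagree. One arranges the resulting quasiregular map $F\colon\rs\to\rs$ so that $F=\jn$ on a neighborhood of $\Pi$; $F$ commutes with $z\mapsto e^{2\pi i/n}z$; $F$ has degree $n$, with $n$ superattracting fixed points of local degree $2$, one critical point of multiplicity $n-2$ that is a pole over the repelling fixed point $\infty$, and no other critical points; and $J(F)=\Pi$, the surgery having absorbed every other Julia component of $\jn$ into the Fatou set. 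Since the collars where the complex structure is distorted lie in attracting basins of superattracting cycles, every $F$-orbit meets them at most finitely often, so pulling back the standard structure along forward orbits gives an $F$-invariant Beltrami form $\mu$ with $\|\mu\|_\infty<1$. By the measurable Riemann mapping theorem there is a quasiconformal $\varphi$, unique once normalized at three points compatibly with the symmetry, with $\varphi^{*}\mu_0=\mu$; then $R:=\varphi\circ F\circ\varphi^{-1}$ is rational.

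It remains to identify $R$. It has degree $n$, has $n$ superattracting fixed points of local degree $2$, has one further critical point (of multiplicity $n-2$, a pole mapped to the repelling fixed point $\varphi(\infty)$), and, by the uniqueness of $\varphi$, commutes with the order-$n$ rotation fixing $\varphi(\infty)$ and permuting the superattracting points. Normalizing so that $\varphi(\infty)=\infty$ and the superattracting points are the $n$-th roots of unity, these data leave no free parameters and force $R=N_{f_n}(z)=\frac{(n-1)z^n+1}{nz^{n-1}}$. Finally, since $F=\jn$ near $\Pi$ and $J(F)=\Pi$, the restriction of $\varphi$ is a quasiconformal homeomorphism from a neighborhood of $\Pi$ onto a neighborhood of $J(N_{f_n})=\varphi(\Pi)$ conjugating $\jn|_\Pi$ with $N_{f_n}|_{J(N_{f_n})}$; hence the $\jn$-invariant component $\Pi$ is a quasiconformal copy of $J(N_{f_n})$, as claimed.

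The delicate points lie in the last two steps. Building the model on the filled basins requires it to be simultaneously compatible with $\jn$ along $\Pi$, equivariant under the rotational symmetry, and of the right local degrees, so that collapsing the third sheet at each root together with the ramification carried by the free critical points (and discarding the $n$ superfluous poles in the holes) brings the global degree down from $2n$ to \emph{exactly} $n$; this degree bookkeeping is why the surgery must be performed explicitly rather than merely quoted, and it relies on the combinatorics of \cite{CCV1}. The other crux is the rigidity statement that the straightened map is exactly $N_{f_n}$ and not just some Newton-like map — this combines the uniqueness of $\varphi$ with the symmetry and the critical portrait — together with the verification that $J(F)=\Pi$, so that $\varphi$ carries $\Pi$ onto the whole connected set $J(N_{f_n})$ and not a proper part of it.
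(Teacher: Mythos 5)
Your proposal follows essentially the same route as the paper: fill the holes of the immediate basins with a degree-two quasiconformal model (the paper does this on the region between two explicit Jordan curves $\Gamma^{-1}\to\Gamma$ constructed in Proposition 3.1), extend by the rotational symmetry, pull back the standard complex structure, straighten, and use the uniqueness of the normalized integrating map to get equivariance and identify the straightened map. The only real variation is the final identification step: where you pin down the straightened map by a direct parameter count from its degree, critical portrait and symmetry, the paper instead invokes Tan Lei's rigidity theorem (a degree-$d$ rational map with $d$ distinct superattracting fixed points fixing a non-superattracting point at infinity equals $N_P$) and then uses the symmetry only to force $P(z)=z^n-\phi_0(1)^n$, linearly conjugate to $f_n$; both arguments work.
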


We finish the paper by proving that there exist parameters such that the hypothesis of Theorem A holds. We split the proof of Theorem B in two cases, $n=2$ and $n \ge 3$. For the case $n=2$, much work was previously done in \cite{CCV}, and the map is conjugate to a Blaschke product. For the case $n\ge 3 $, the map is not conjugate to a Blaschke product. We provide a map conjugate to $\jn$, for which we prove, using various properties and computational arguments, that the immediate basin of attraction of $z=\infty$ is infinitely connected. Numerical computations confirm to us the existence of such hyperbolic components (see Figure \ref{fig:Param}).

\begin{thmB}

Let $n\ge 2$. Then there exists $\alpha_0>0$ large enough such that for $\alpha>\alpha_0$, $\alpha \in \mathbb{R}$, $A_{n,\alpha}^{*}(1)$ is infinitely connected. Moreover, for $n=2$, the statement is true for any $\alpha\in \mathbb{C}$ such that $|\alpha|>\alpha_0$.
\end{thmB}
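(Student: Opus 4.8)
The plan is to split into the two announced cases and, in both, reduce the claim to showing that some free critical orbit escapes to $z=\infty$ (equivalently, lands in the basin of a fixed point that is \emph{not} an $n$-th root of unity), forcing the immediate basin $A_{n,\alpha}^{*}(1)$ to pick up infinitely many preimages of that escaping structure and hence to be infinitely connected. The engine for the ``infinitely connected'' conclusion should be the following mechanism, already visible in the setup: $z=0$ is a critical point of multiplicity $n-2$ mapped to the fixed point $z=\infty$, and the $n$-fold rotational symmetry (\thref{rotationwithn}) together with the degree-$2n$ structure of $\jn$ means that $A_{n,\alpha}^{*}(1)$, if it contains no free critical point, must be a proper subset of its basin whose boundary is hit by countably many preimages of the complementary Fatou behaviour; by a standard Riemann–Hurwitz count on the restriction of $\jn$ to $A_{n,\alpha}^{*}(1)$ one shows the connectivity is not finite. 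I would state this Riemann–Hurwitz bookkeeping as the first lemma: if $A_{n,\alpha}^{*}(1)$ contains only the critical points forced by the superattracting fixed point (local degree $3$) and no free critical point, and if moreover it is not completely invariant, then it is infinitely connected.

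For $n=2$: here I would invoke the analysis of \cite{CCV}, where $O_{2,\alpha}$ is shown to be conjugate, on a suitable completely invariant set, to a Blaschke product. The strategy is to use that Blaschke model to locate the free critical point $c_{2,\alpha,\xi}$ and show that for $|\alpha|$ large its orbit converges to $z=\infty$ rather than to a root of $f_2$. Concretely, one expands $\jn$ and $c_{n,\alpha,\xi}$ from \eqref{exprcpjn} in powers of $1/\alpha$: as $\alpha\to\infty$ the map converges (uniformly on compacta away from the poles) to Newton's method $N_{f_n}$, whose free critical point for $n=2$ sits at the origin region and whose dynamics are understood, but the \emph{next-order} term controls whether the perturbed critical orbit falls into the immediate basin of $1$ or escapes through the channel around $z=0\mapsto z=\infty$. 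I would carry out this perturbative estimate to show $c_{2,\alpha,\xi}\in A_{2,\alpha}(\infty)\setminus A_{2,\alpha}^{*}(1)$, so that $A_{2,\alpha}^{*}(1)$ contains no free critical point; combined with the first lemma (and the fact that for these parameters $A_{2,\alpha}^{*}(1)$ is not completely invariant, since $\infty$ has its own basin), this gives infinite connectivity, and the estimate being in terms of $|\alpha|$ only is what yields the stronger ``any $\alpha\in\mathbb{C}$ with $|\alpha|>\alpha_0$'' conclusion.

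For $n\ge 3$: the Blaschke structure is lost, so I would instead produce an explicit auxiliary map conjugate to $\jn$ — normalizing coordinates so that $z=\infty$ is sent to $0$ and the $n$-fold symmetry becomes a clean $z\mapsto \zeta z$ action — and work on the quotient by this symmetry to reduce to a one-variable parameter problem, exactly as the introduction advertises. On this model I would: (i) use $\alpha>\alpha_0$ real and large so that the map is a small explicit perturbation of (the quotient of) $N_{f_n}$; (ii) locate the image of the free critical point via \eqref{exprcpjn} and show, by an invariant-interval / real one-dimensional argument on the real slice (tracking signs of the quotiented map and its derivative, using that $\alpha$ is real), that the critical orbit is trapped in a neighbourhood of the fixed point corresponding to $z=\infty$ and converges there; (iii) conclude as before that $A_{n,\alpha}^{*}(1)$ is free of free critical points and apply the first lemma. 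The main obstacle is step (ii) in the $n\ge 3$ case: without the Blaschke rigidity one must honestly control the perturbed map well enough to trap the critical orbit, and the cleanest route is a computer-assisted interval estimate on the real-analytic quotient map for one explicit large $\alpha$, then an open-condition/continuity argument to pass to all $\alpha>\alpha_0$ — this is the ``various properties and computational arguments'' the introduction alludes to, and making that rigorous (choosing the interval, bounding the derivative away from criticality on it, verifying no free critical point leaks into $A_{n,\alpha}^{*}(1)$) is where essentially all the work lies.
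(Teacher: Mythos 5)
Your proposal founders on its very first step: the ``Riemann--Hurwitz bookkeeping lemma'' you propose is false, and the criterion it encodes points in exactly the wrong direction. By \thref{trich} (Proposition 3.1 of \cite{CCV1}, quoted in the preliminaries), if $\immo$ contains \emph{no} critical point other than the superattracting fixed point $z=1$ itself, then the B{\"o}ttcher coordinate extends to all of $\immo$ and the basin is \emph{simply} connected --- failure of complete invariance does not help. The correct characterisation is \thref{cortrich}: $\immo$ is multiply connected if and only if it \emph{contains} the free critical point $c_{n,\alpha,1}$ \emph{and} contains no preimage of $z=1$ other than $z=1$ itself. Consequently, your plan of proving that the free critical orbit escapes to $z=\infty$, so that $c_{2,\alpha,\xi}\in A_{2,\alpha}(\infty)\setminus A_{2,\alpha}^{*}(1)$ and $\immo$ is free of critical points, would establish the negation of Theorem B. (The analogy with McMullen-type families is misleading here: in those examples it is the full basin of $\infty$, not the immediate basin, that is infinitely connected when the critical orbits escape.)

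What is actually needed, and what the paper does, is the opposite containment together with a separation argument, with no perturbative comparison to $N_{f_n}$ at all. For $n=2$ one conjugates to $\jdoi(z)=z^3\frac{z-a}{1-az}$ with $a=2(\alpha-1)$, so that $z=\infty$ plays the role of the root $z=1$; one checks that $|z|>2a$ forces $z\in\immodoi$, that $\jdoi(c_+)$ lands in that region for $a$ large, and that $c_+$ cannot lie in a strict preimage of $\immodoi$ (a degree count: such a component would contain two poles), whence $c_+\in\immodoi$. A curve from $c_+$ to $\infty$ inside $\immodoi$, together with its reflection in the real line, then encloses the point $a$, which is a preimage of the \emph{other} root and hence not in $\immodoi$; this exhibits multiple (hence infinite) connectivity, and the passage to complex $\alpha$ with $|\alpha|>\alpha_0$ uses that connectivity is constant on the hyperbolic component containing all large $|a|$. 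For $n\ge 3$ the paper does not track the critical orbit either: after conjugating by $1/(z-1)$ it locates a real zero $z_0\in\big(\alpha(n-1)-n,\,\alpha(n-1)\big)$ of the new map $\jnconju$, proves that $|z|>n\alpha$ implies $z\in\immR$, and shows that the vertical segment $\mathrm{Re}\,z=n\alpha/2$, $|\mathrm{Im}\,z|\le n\alpha$ is mapped into that region and hence lies in $\immR$; closing this segment up through $\{|z|>n\alpha\}$ yields a curve in $\immR$ separating $z_0$ (which maps to the fixed point $0$, so lies outside $\immR$) from $0$ itself. If you want to salvage your outline, replace your first lemma by \thref{cortrich} and aim all subsequent estimates at showing the free critical point lies \emph{inside} $\immo$ with no extra preimage of $z=1$ there, or at directly constructing such a separating curve.
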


\begin{figure}
\centering
{	\begin{tikzpicture}
			\begin{axis}[width=0.48\textwidth, axis equal image, scale only axis,  enlargelimits=false, axis on top]
				\addplot graphics[xmin=-1,xmax=4,ymin=-2.5,ymax=2.5] {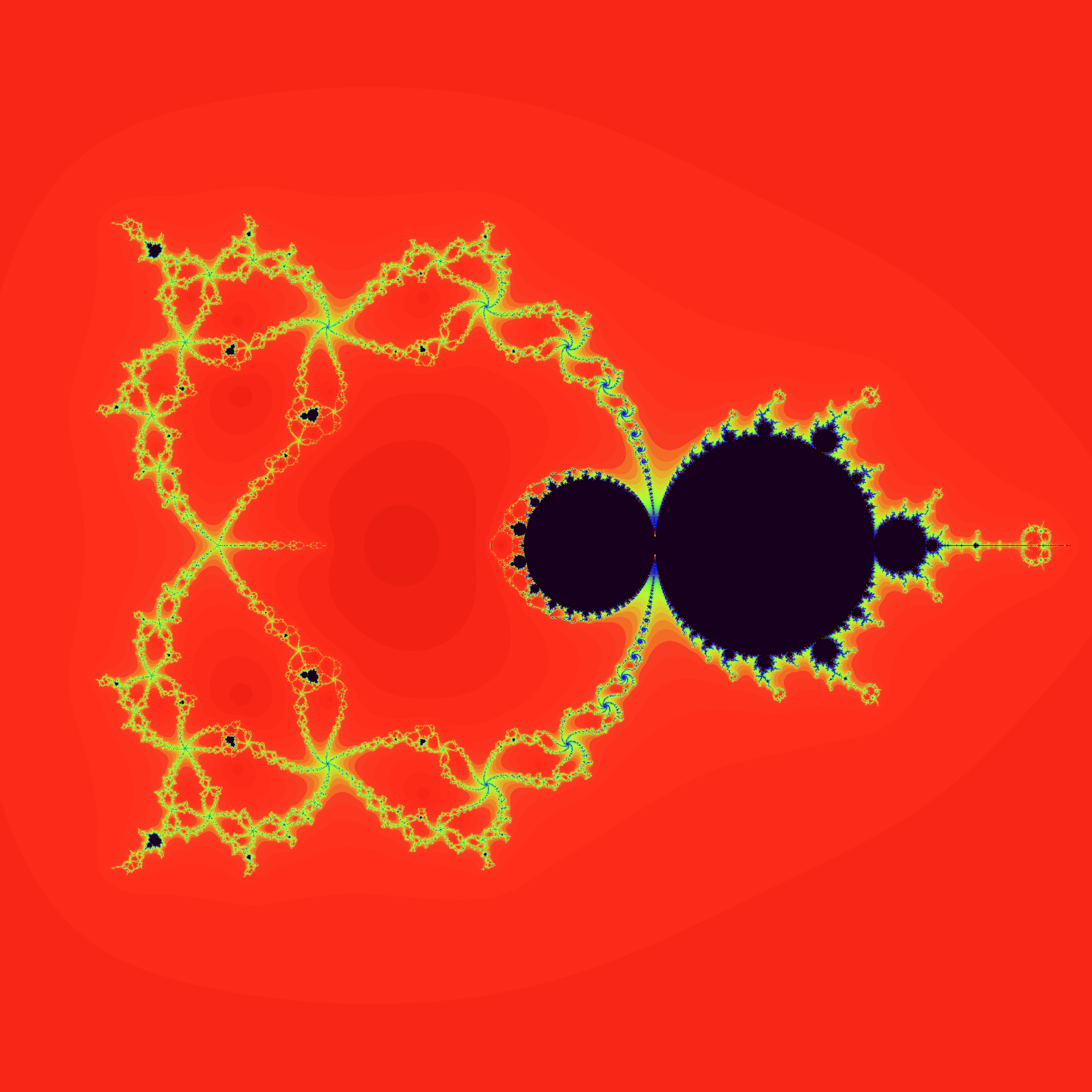};
			\end{axis}
	\end{tikzpicture}}
\hfill
{\begin{tikzpicture}
\begin{axis}[width=0.48\textwidth, axis equal image, scale only axis,  enlargelimits=false, axis on top]
				\addplot graphics[xmin=-1,xmax=4,ymin=-2.5,ymax=2.5] {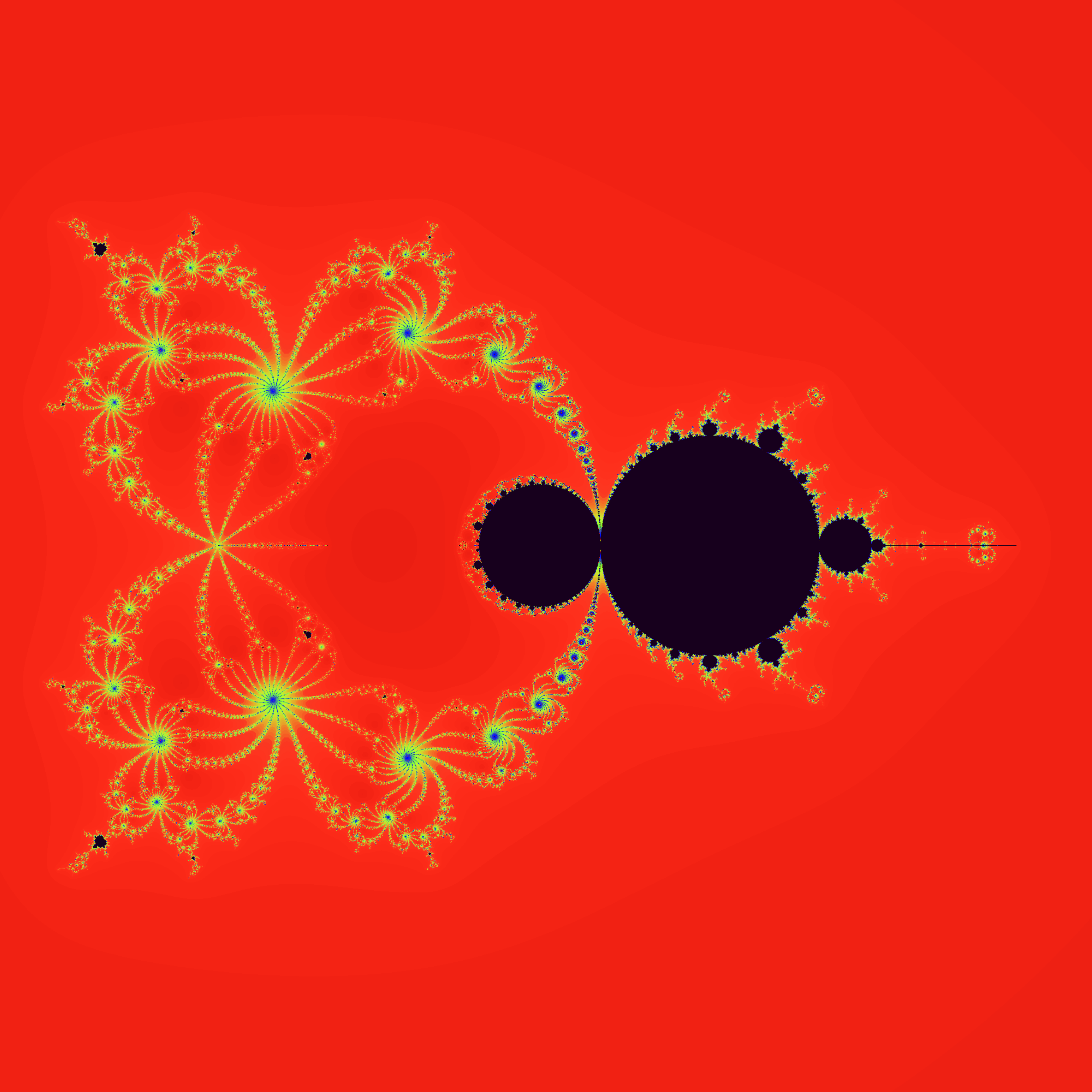};
			\end{axis}
	\end{tikzpicture}}
\caption{Left figure illustrates the parameter plane of $\jn$ for $n=3$. In the right figure we can see the parameter plane of $\jn$ for $n=5$.}
\label{fig:Param}
\end{figure}
The paper is organised as follows. In Section~\ref{sec:preliminaries} we briefly introduce the tools later used in the paper. In Section \ref{sec:ThA} we prove Theorem A. In Section \ref{sec:ThB} we prove Theorem B.

\section{Preliminaries}
\label{sec:preliminaries}

In this section we present the main tools that we use along the paper. Before, we introduce some notation. Let $U\subset \mathbb{C}$ be a multiply connected open set. We denote by $\textrm{Fill}(U)$ the minimal simply connected open set which contains $U$ but not $z=\infty$. Let $\gamma\in\mathbb C$ be a Jordan curve. We denote by ${\rm Ext}\left(\gamma\right)$ and ${\rm Int}\left(\gamma\right)$ the connected components of $\hat{\mathbb C} \setminus \gamma$ that contain $z=\infty$ and do not contain $z=\infty$, respectively. We denote the circle centered at the origin and of radius $c>0$ by $\mathbb{S}_c$. Finally, if $U \subset \mathbb C$ we denote by $\overline{U}$ its closure.

Let $\jnc$ be the map obtained by applying the Cebyshev-Halley method with parameter $\alpha$ to the map $f_{n,c}=z^n-c$. The following lemma proven in \cite{CCV} states that for any $c\in \mathbb{C}^{*}$, the map $\jnc$ is conjugated to $\jnmi=\jn$. 
\begin{lemma}
\thlabel{jordica}
Let $c \in \mathbb{C}^{*}$, and let $\eta_c(z)= \frac{z}{\sqrt[\leftroot{-2}\uproot{2}n]{-c}}$, $\eta_c:\rs \to \rs$. Then $\jnc$ and $\jnmi$ are conjugated by the map $\eta_c$, i.e. $\jnc\circ \eta_c(z)=\eta_c \circ \jnmi (z)$.
\end{lemma}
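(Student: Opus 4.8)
The plan is to deduce the statement from two elementary and standard features of root-finding operators: invariance under multiplying the target map by a nonzero constant, and covariance under affine changes of variable. For a holomorphic $g$ and the fixed parameter $\al$, write the Cebyshev-Halley operator as
$$
\mathcal{O}_{g}(z)=z-\left(1+\frac{1}{2}\,\frac{L_g(z)}{1-\al L_g(z)}\right)\frac{g(z)}{g'(z)},\qquad L_g=\frac{g\,g''}{(g')^{2}},
$$
so that $\jnc=\mathcal{O}_{f_{n,c}}$ and $\jnmi=\mathcal{O}_{f_n}$, where $f_n(z)=z^{n}-1$; note also that $\eta_c:\rs\to\rs$ is a M\"obius transformation fixing $z=\infty$, so an identity proved on $\bc$ automatically gives a conjugacy of rational maps of $\rs$.

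\textbf{Two properties of $\mathcal{O}$.} First, $\mathcal{O}_g$ depends on $g$ only through $g/g'$ and $L_g$, and for $k\in\bc^{*}$ both $(kg)/(kg)'=g/g'$ and $L_{kg}=L_g$ hold, so $\mathcal{O}_{kg}=\mathcal{O}_g$. Second, let $\Phi(w)=\beta w$ with $\beta\in\bc^{*}$; then $(g\circ\Phi)'(w)=\beta\,g'(\Phi(w))$ and $(g\circ\Phi)''(w)=\beta^{2}g''(\Phi(w))$, hence $L_{g\circ\Phi}(w)=L_g(\Phi(w))$ and $\tfrac{(g\circ\Phi)(w)}{(g\circ\Phi)'(w)}=\tfrac1\beta\,\tfrac{g(\Phi(w))}{g'(\Phi(w))}$. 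Plugging these into the formula for $\mathcal{O}$ and simplifying gives $\mathcal{O}_{g\circ\Phi}(w)=w+\tfrac1\beta\big(\mathcal{O}_g(\Phi(w))-\Phi(w)\big)=\Phi^{-1}\!\big(\mathcal{O}_g(\Phi(w))\big)$, i.e. $\mathcal{O}_{g\circ\Phi}=\Phi^{-1}\circ\mathcal{O}_g\circ\Phi$.

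\textbf{Conclusion.} Apply the second property with $g=f_n$ and $\Phi=\eta_c$, so $\beta=1/\sqrt[n]{-c}$ and $\beta^{n}=-1/c$. Then $f_n\circ\eta_c(w)=\beta^{n}w^{n}-1=-\tfrac1c\,(w^{n}+c)$ is a nonzero constant multiple of $f_{n,c}$, so by the first property
$$
\jnc=\mathcal{O}_{f_{n,c}}=\mathcal{O}_{f_n\circ\eta_c}=\eta_c^{-1}\circ\mathcal{O}_{f_n}\circ\eta_c=\eta_c^{-1}\circ\jnmi\circ\eta_c .
$$
Equivalently $\eta_c\circ\jnc=\jnmi\circ\eta_c$, so $\eta_c$ conjugates $\jnc$ and $\jnmi$, which is the assertion of the lemma.

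\textbf{Where the work is.} There is no genuine obstacle; the lemma is essentially a bookkeeping statement, and the scaling/covariance properties above are the standard reason why root-finding methods behave well under affine conjugation. The only points that require attention are identifying the constant $-1/c$ in $f_{n,c}=-c\,(f_n\circ\eta_c)$ — which is exactly where the precise form $\eta_c(z)=z/\sqrt[n]{-c}$ enters — and carrying the factor $1/\beta$ correctly through the correction term in the affine-covariance computation. As an alternative one could verify the identity by direct substitution into the explicit rational expression~\eqref{exprjn} (with $c$ reinstated), but the coordinate-free argument is shorter and makes clear that the same proof works verbatim for Chebyshev's method ($\al=0$), Halley's ($\al=\tfrac12$) and Newton's method ($\al\to\infty$).
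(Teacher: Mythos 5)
Your two covariance properties are correct and the computation through them is sound; note that the paper itself does not prove this lemma (it is imported from \cite{CCV}), so there is no internal proof to compare with, and your scaling/affine-covariance argument is the natural one.

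One point needs fixing, though. What your argument actually yields is $\eta_c\circ\jnc=\jnmi\circ\eta_c$, i.e. $\jnc=\eta_c^{-1}\circ\jnmi\circ\eta_c$, whereas the lemma as printed asserts $\jnc\circ\eta_c=\eta_c\circ\jnmi$. These two identities are \emph{not} equivalent, contrary to your closing "Equivalently": they differ by replacing $\eta_c$ with $\eta_c^{-1}$. In fact your orientation is the one forced by the normalizations $\eta_c(z)=z/\sqrt[n]{-c}$ and $f_{n,c}(z)=z^n+c$: any conjugacy $\phi$ with $\phi\circ\jnc=\jnmi\circ\phi$ must send the superattracting fixed points of $\jnc$ (the $n$-th roots of $-c$) to the $n$-th roots of unity, which is what $\eta_c$ does, while the printed identity would force the superattracting fixed points of $\jnc$ to sit at the points $\xi/\sqrt[n]{-c}$, which are not zeros of $z^n+c$ in general. (The paper is also internally inconsistent about whether $f_{n,c}=z^n+c$ or $z^n-c$, which is likely the source of the misprint.) So your mathematics establishes the intended conjugacy, but you should either state the conclusion as $\eta_c\circ\jnc=\jnmi\circ\eta_c$, or conjugate by $\eta_c^{-1}(z)=z\sqrt[n]{-c}$ if you want to reproduce the equation exactly as displayed in the lemma; do not present the two orderings as interchangeable.
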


The next lemma shows that the map $\jn$ is symmetric with respect to rotation by an $n-$th root of the unity. 

\begin{lemma}[\cite{CCV}, Lemma 6.2]
\thlabel{rotationwithn}
Let $n\in \mathbb{N}$ and let $\xi$ be an $n$-th root of the unity, i.e.\ $\xi^n=1$. Then $I_{\xi}(z)\coloneqq \xi z$ conjugates $\jn$ with itself, i.e.\ $\jn \circ I_{\xi}(z) =I_{\xi} \circ \jn (z)$.
\end{lemma}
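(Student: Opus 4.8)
The plan is to establish the commutation relation $\jn\circ I_{\xi}=I_{\xi}\circ\jn$ directly from the closed form \eqref{exprjn}, and to record the conceptual reason behind it. The key structural observation about \eqref{exprjn} is that the numerator of $\jn$ is a polynomial in $z^{n}$ alone and the denominator is $z^{\,n-1}$ multiplied by a polynomial in $z^{n}$, so we may write
\begin{equation*}
\jn(z)=\frac{P(z^{n})}{z^{\,n-1}\,R(z^{n})},
\end{equation*}
with $P$ and $R$ the explicit polynomials read off from \eqref{exprjn}. Since $\xi^{n}=1$, the substitution $z\mapsto \xi z$ leaves $z^{n}$ (hence $P(z^{n})$ and $R(z^{n})$) unchanged and multiplies the factor $z^{\,n-1}$ by $\xi^{\,n-1}=\xi^{-1}$. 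Thus $\jn(\xi z)=\xi\,\jn(z)$ at every $z\in\mathbb{C}\setminus\{0\}$ where the expression is defined, which is exactly $\jn\circ I_{\xi}=I_{\xi}\circ\jn$ on a dense open subset of $\rs$; since both sides are rational maps of $\rs$ they must agree everywhere, and at the exceptional points $z=0$ and $z=\infty$ one checks directly that $\jn$ maps each of them to $z=\infty$, a fixed point of $I_{\xi}$, so the identity persists there as well.

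Conceptually, the relation is a manifestation of the fact that the Cebyshev--Halley operator is equivariant under affine changes of variable. If $\phi(z)=az+b$ and $g$ is holomorphic, then $\phi''\equiv 0$ gives $L_{g\circ\phi}(z)=L_{g}(\phi(z))$ and $(g\circ\phi)/(g\circ\phi)'(z)=\tfrac{1}{a}\,(g/g')(\phi(z))$; substituting these into the defining formula of the Cebyshev--Halley family and simplifying yields
\begin{equation*}
\mathrm{CH}_{\alpha,\;g\circ\phi}=\phi^{-1}\circ \mathrm{CH}_{\alpha,g}\circ\phi,
\end{equation*}
where $\mathrm{CH}_{\alpha,g}$ denotes the Cebyshev--Halley map of $g$ with parameter $\alpha$. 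Taking $g=f_{n}(z)=z^{n}-1$ and $\phi=I_{\xi}$, and using $f_{n}\circ I_{\xi}=f_{n}$ when $\xi^{n}=1$, this reads $\jn=\mathrm{CH}_{\alpha,f_{n}}=\mathrm{CH}_{\alpha,\;f_{n}\circ I_{\xi}}=I_{\xi}^{-1}\circ\jn\circ I_{\xi}$, which is the assertion.

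There is no real obstacle in this lemma: in the first route the only care needed is the bookkeeping at $z=0$ and $z=\infty$, handled by the identity theorem for rational maps (equivalently, by noting that $I_{\xi}$ is a M\"obius map fixing $\infty$); in the second route one must check that the correction factor $1+\tfrac{1}{2}\,L_{g}/(1-\alpha L_{g})$ is unaffected by the change of variable, which holds because $L_{g}$ is an affine invariant. I would present the first argument, as it is self-contained given \eqref{exprjn} and makes transparent why precisely one factor of $\xi$ survives.
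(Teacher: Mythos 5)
Your proposal is correct. Note that the paper itself offers no proof of this lemma: it is quoted verbatim from \cite{CCV} (Lemma 6.2), so there is no internal argument to compare against. Your first route is the standard one and is complete: from \eqref{exprjn} the numerator of $\jn$ is a polynomial in $z^{n}$ and the denominator is $z^{n-1}$ times a polynomial in $z^{n}$, so $z\mapsto\xi z$ with $\xi^{n}=1$ fixes both polynomial factors and multiplies the denominator by $\xi^{n-1}=\xi^{-1}$, giving $\jn(\xi z)=\xi\,\jn(z)$; the identity-theorem remark already takes care of $z=0$ and $z=\infty$, so the separate check there is harmless but redundant. Your second route, the affine equivariance $\mathrm{CH}_{\alpha,\,g\circ\phi}=\phi^{-1}\circ\mathrm{CH}_{\alpha,g}\circ\phi$ (valid because $L_{g}$ is invariant under affine changes of variable), is a genuinely different and more conceptual argument: it proves the symmetry without touching the unwieldy formula \eqref{exprjn}, it explains why the symmetry holds for every member of the Cebyshev--Halley family at once, and the same computation with $\phi=\eta_{c}$ is exactly what underlies \thref{jordica}. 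Either presentation would be acceptable; the computational one is the closest in spirit to the cited source.
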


For $\alpha=\frac{1}{2}$ and $\alpha =\frac{2n-1}{2n-2}$, the family $\jn$ degenerates to maps of a lower degree (see \cite{CCV1}, Lemma 4.1). For other
values of $\alpha$, the map $\jn$ has degree $2n$, hence, it has $4n-2$ critical points. The point $z=0$ maps with degree $n-1$ to the fixed point $z=\infty$. Since the $n$ roots of $f_{n}$ are superattracting fixed points of local degree $3$, there remain precisely $n$ free critical points. The next lemma follows directly from \thref{rotationwithn}, since the orbits of the free critical points are symmetric.
\begin{lemma}[\cite{CCV1}, Lemma 3.4]
\thlabel{atmostonepoint}
Let $n \ge 2$ and $\xi \in \mathbb{C}$, such that $\xi^n=1$. For all $\alpha \in \mathbb{C}$, the basin of attraction $A_{n, \alpha, \xi}$ contains at most one critical point other than $z=1$.
\end{lemma}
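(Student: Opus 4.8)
The plan is to derive the statement from the rotational symmetry of $\jn$ recorded in \thref{rotationwithn}, together with a counting argument on the $n$ free critical points. First, recall that the critical points of $\jn$ are $z=0$, the $n$-th roots of unity (each a superattracting fixed point of local degree $3$), and the $n$ free critical points $c_{n,\alpha,\zeta}=\zeta\,K^{1/n}$, $\zeta^n=1$, where $K=K(n,\alpha)$ is the constant appearing in \eqref{exprcpjn}. Since $\jn(0)=\infty$ and $z=\infty$ is a fixed point of $\jn$ distinct from $1$, and since each $n$-th root of unity $\zeta\ne1$ is itself a fixed point of $\jn$, none of the points $z=0$ or $\{\zeta:\zeta^n=1,\ \zeta\ne1\}$ has its orbit converging to $z=1$, so none of them lies in $A_{n,\alpha}(1)$. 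Hence the only critical points that can lie in $A_{n,\alpha}(1)$ other than $z=1$ itself are free critical points, and it suffices to show that at most one $c_{n,\alpha,\zeta}$ belongs to $A_{n,\alpha}(1)$. (At the parameters $\alpha\in\{0,\tfrac12\}$ one has $K=0$, so all free critical points collapse onto $z=0$ and the claim is vacuous, while at $\alpha=\tfrac{2n-1}{2n-2}$ the map $\jn$ has lower degree; thus assume henceforth that the $n$ free critical points are pairwise distinct and different from $z=0$.)

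Next, I would invoke \thref{rotationwithn}: for every $\xi$ with $\xi^n=1$ the rotation $I_\xi(z)=\xi z$ conjugates $\jn$ to itself, and, being a homeomorphism of $\rs$, it maps the basin of attraction $A_{n,\alpha}(1)$ onto the basin of attraction of $I_\xi(1)=\xi$; that is, $I_\xi\bigl(A_{n,\alpha}(1)\bigr)=A_{n,\alpha}(\xi)$. With a compatible choice of the $n$-th root in \eqref{exprcpjn} one also has $I_\xi(c_{n,\alpha,\zeta})=c_{n,\alpha,\xi\zeta}$, so $I_\xi$ permutes the set of $n$ free critical points --- in fact the cyclic group $\{I_\xi:\xi^n=1\}$ acts on this set simply transitively. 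Consequently $I_\xi$ restricts to a bijection from the set of free critical points contained in $A_{n,\alpha}(1)$ onto the set of free critical points contained in $A_{n,\alpha}(\xi)$, so these two sets have the same cardinality.

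Finally, I would conclude by counting. Let $m$ be the number of free critical points lying in $A_{n,\alpha}(1)$; by the previous step $A_{n,\alpha}(\xi)$ contains exactly $m$ free critical points for every $\xi$ with $\xi^n=1$. These $n$ basins are pairwise disjoint, so together they contain $nm$ distinct free critical points; since $\jn$ has only $n$ free critical points in all, $nm\le n$, that is $m\le 1$, which is exactly the assertion for the basin of $z=1$ --- and running the same argument with $\xi$ in place of $1$ gives it for every basin $A_{n,\alpha}(\xi)$ simultaneously. I do not expect a genuine obstacle in this argument; the only points requiring a little attention are the bookkeeping at the exceptional values of $\alpha$ and the fact --- furnished by the simply transitive action on the free critical points --- that the count $m$ is the same for every one of the basins.
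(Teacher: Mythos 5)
Your argument is correct and is essentially the route the paper itself takes: it derives the lemma from the rotational symmetry of \thref{rotationwithn}, which permutes the free critical points and the basins $A_{n,\alpha}(\xi)$ simultaneously, so that a disjointness-and-counting argument forces at most one free critical point per basin. Your write-up simply makes explicit the counting and the exclusion of $z=0$ and the other roots of unity, which the paper leaves implicit when it says the lemma ``follows directly'' from the symmetry.
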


The following proposition establishes a trichotomy for rational maps with the property described in \thref{atmostonepoint}. Based on the existence of the critical point and preimages of the superattracting fixed point in the immediate basin of attraction, we can establish if the immediate basin is simply connected.

\begin{prop}[\cite{CCV1}, Proposition 3.1]
\thlabel{trich}
Let $f: \rs \to \rs$ be a rational map and let $z=0$ be a superattracting fixed point of $f$. Assume that $A_f(0)$ contains at most one critical point other than $z=0$. Then, exactly one of the following statements holds.
\begin{enumerate}
\item{The set $A_f^*(0)$ contains no critical point other than $z=0$. Then $A_f^{*}(0)$ is simply connected.}
\item{The set $A_f^*(0)$ contains a critical point $c \ne 0$ and a preimage $z_0\ne 0$ of $z=0$. Then $A_f^{*}(0)$ is simply connected.}
\item{The set $A_f^*(0)$ contains a critical point $c \ne 0$ and no preimage of $z=0$ other than $z=0$ itself. Then $A_f^{*}(0)$ is multiply connected.}
\end{enumerate}
\end{prop}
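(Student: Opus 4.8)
The plan is to reduce everything to the restriction of $f$ to $U:=A_f^*(0)$. Since $0$ is a fixed point lying in the Fatou set, $f(U)=U$ and $f|_U\colon U\to U$ is a proper holomorphic map (a standard fact about Fatou components); set $\delta:=\deg(f|_U)$ and $k:=\deg_0 f\ge 2$ (as $0$ is superattracting), so $\delta\ge k\ge 2$. The first step is a dichotomy from Riemann--Hurwitz: if $U$ were finitely connected, say $\rs\setminus U$ has $m$ components so that $\chi(U)=2-m$, then $2-m=\delta(2-m)-R$, i.e.
$$R=(\delta-1)(2-m),$$
where $R=\sum_{z\in U}(\deg_z f-1)\ge k-1\ge 1$ is the total branching of $f|_U$. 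As $\delta-1>0$ and $R>0$, this forces $2-m>0$, hence $m=1$. Thus $U$ is either simply connected or infinitely connected; to prove (1) and (2) it suffices to exclude infinite connectivity, and to prove (3) it suffices to exclude simple connectivity.

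The second step handles the simply connected case. If $U$ is simply connected, choose a Riemann map $\phi\colon(\mathbb{D},0)\to(U,0)$; then $B:=\phi^{-1}\circ f\circ\phi$ is a proper self-map of $\mathbb{D}$, hence a finite Blaschke product of degree $\delta$ with $B(0)=0$ and $\deg_0 B=k$. A degree-$\delta$ Blaschke product has exactly $\delta-1$ critical points and exactly $\delta$ zeros in $\mathbb{D}$, counted with multiplicity, of which $0$ accounts for $k-1$ and $k$ respectively. Transporting back through $\phi$: a simply connected $A_f^*(0)$ contains exactly $\delta-k$ critical points other than $0$ and exactly $\delta-k$ preimages of $0$ other than $0$, in both cases counted with multiplicity. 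In particular, if $A_f^*(0)$ is simply connected, then it contains a critical point $\ne 0$ if and only if it contains a preimage of $0$ different from $0$.

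Statement (3) now follows at once: if $A_f^*(0)$ contains a critical point $c\ne 0$ but no preimage of $0$ other than $0$, the equivalence above shows it is not simply connected, so by the dichotomy it is infinitely connected, hence multiply connected. For (1): if $A_f^*(0)$ contains no critical point other than $0$, the only obstruction to extending the B\"ottcher coordinate at $0$ is absent, so it extends to a conformal isomorphism $A_f^*(0)\to\mathbb{D}$ conjugating $f$ to $w\mapsto w^{k}$; in particular $A_f^*(0)$ is simply connected.

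The remaining case (2) is the heart of the matter, and where I expect the main difficulty. Here $A_f^*(0)$ contains a critical point $c\ne 0$ and a preimage $z_0\ne 0$ of $0$, and by hypothesis $c$ is the \emph{unique} critical point of $f$ in $A_f(0)$ other than $0$. Suppose, for contradiction, that $U=A_f^*(0)$ is infinitely connected, and let $\Omega\ni 0$ be the B\"ottcher domain, the maximal domain on which the B\"ottcher coordinate $\Phi$ extends conformally. Since $z_0$ is an extra preimage of $0$ we have $\delta>k$, and if $\Omega$ were all of $U$ then $f|_U$ would be conjugate via $\Phi$ to $w\mapsto w^{k}$, a proper map of degree at most $k<\delta$ --- impossible; so $\Omega\subsetneq U$, $\Omega$ is simply connected, $c\notin\Omega$ (it would be a critical point of $f|_\Omega$), and $z_0\notin\Omega$ (its $\Phi$-image would be a nonzero $k$-th root of $0$). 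The plan is then to track the nested pull-backs $U_j:=(f|_U)^{-j}(\Omega_0)$ of a small B\"ottcher disk $\Omega_0$ around $0$: these increase to $U$, and each $f\colon U_{j+1}\to U_j$ is proper of degree $\delta$, so Riemann--Hurwitz applies component by component. One aims to show that the single critical point $c$ cannot simultaneously create a multiply connected pull-back $U_j$ and leave room for the extra preimage $z_0$ --- concretely, infinite connectivity should force $\delta=k$, contradicting $f(z_0)=0$ with $z_0\ne 0$ --- so $A_f^*(0)$ is simply connected. The delicate point, and the main obstacle, is this component-by-component bookkeeping: distributing the branching of $f|_U$ among the (a priori many) components of each $U_{j+1}$ and checking that the outcome is consistent with there being exactly one extra critical point; by contrast, the Riemann--Hurwitz/Blaschke dichotomy and cases (1) and (3) are routine once the properness of $f|_U$ and the B\"ottcher extension theorem are available.
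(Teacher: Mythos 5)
Your Riemann--Hurwitz dichotomy (finitely connected $\Rightarrow$ simply connected, from $R=(\delta-1)\chi(U)>0$), your Blaschke-product count in the simply connected case, and the resulting proofs of cases (1) and (3) are correct; the deduction of (3) from the equivalence ``simply connected $\Rightarrow$ (extra critical point $\iff$ extra preimage of $0$)'' is clean. Note also that the paper does not prove this proposition itself --- it is imported from [CCV1] --- so the only internal point of comparison is the proof of \thref{propcurves}, which reproduces the relevant technique.

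The genuine gap is case (2), which you explicitly leave as a plan rather than a proof, and the plan as stated does not close. First, your target ``infinite connectivity should force $\delta=k$'' is not an independent, easier fact: it is precisely case (2) restated in the contrapositive, so nothing has been reduced. Second, the pull-back bookkeeping has a concrete obstruction you do not address. Writing $U_{j+1}$ for the component of $f^{-(j+1)}(\Omega_0)$ containing $0$ and assuming $U_j$ simply connected, Riemann--Hurwitz gives $\chi(U_{j+1})=1+P_j-\mu$, where $\mu=\deg_c f-1$ and $P_j$ counts the preimages of $0$ in $U_{j+1}\setminus\{0\}$ with multiplicity; to keep $U_{j+1}$ simply connected at the first step where $c$ enters, you must show $P_j\ge\mu$, i.e.\ that $z_0$ (and, if $\mu>1$, further preimages) already lies in \emph{that} component at \emph{that} step. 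Knowing $z_0\in A_f^*(0)=\bigcup_m U_m$ only puts $z_0$ in some late $U_m$. Conversely, even if some $U_{j+1}$ is multiply connected, $A_f^*(0)$ need not be: a hole of $U_{j+1}$ could be absorbed by a later $U_m$, and you give no argument excluding this. The missing ingredient is the geometric analysis at $c\in\partial\Omega$, where $\Omega$ is the maximal B\"ottcher domain: the preimage in the basin of the analytic Jordan curve $\partial f(\Omega)$ is a bouquet of Jordan curves meeting at $c$, exactly one of which surrounds $0$, and the hypothesis that $c$ is the only extra critical point of the \emph{entire} basin $A_f(0)$ (not merely of $A_f^*(0)$) is what decides whether the remaining loops bound extra preimages of $0$ (case (2), after which the pull-backs stay simply connected) or bound genuine holes containing poles and Julia points (case (3)). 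This is exactly the argument carried out in the proof of \thref{propcurves}; without it, your write-up establishes (1) and (3) but not (2).
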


\begin{corollary}[\cite{CCV1}, Corollary 3.5] 
\thlabel{cortrich}
For fixed $n \ge 2$ and $\alpha \in \mathbb{C}$, the immediate basins of attraction of the roots of $z^n-1$ under $O_{n, \alpha}$ are multiply connected if and only if $A_{n,\alpha}^{*}(1)$ contains a critical point $c \ne 1$ and no preimage of $z=1$ other than $z=1$ itself.
\end{corollary}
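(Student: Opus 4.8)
The plan is to read off the statement from the trichotomy of \thref{trich}, applied at each of the roots of $z^n-1$, combined with the rotational symmetry recorded in \thref{rotationwithn}. First I would note that, as explained in the introduction, every $n$-th root of unity $\xi$ is a superattracting fixed point of $\jn$ (of local degree $3$, since the method has order of convergence $3$), and that by \thref{atmostonepoint} the basin $A_{n,\alpha}(\xi)$ contains at most one critical point other than $\xi$. Hence $\jn$ meets the hypotheses of \thref{trich} at $\xi$. Since \thref{trich} is stated for a superattracting fixed point placed at $z=0$, I would first conjugate by the affine map $T_\xi(z)\coloneqq z-\xi$: the map $g_\xi\coloneqq T_\xi\circ\jn\circ T_\xi^{-1}$ is rational of the same degree, has $z=0$ as a superattracting fixed point with $A_{g_\xi}^{*}(0)=T_\xi\bigl(A_{n,\alpha}^{*}(\xi)\bigr)$, and because $T_\xi$ is a homeomorphism of $\rs$ sending critical points to critical points and preimages of $\xi$ to preimages of $0$, the three alternatives of \thref{trich} — in particular the multiple connectivity occurring exactly in case $(3)$ — transfer verbatim. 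This already yields: $A_{n,\alpha}^{*}(\xi)$ is multiply connected if and only if it contains a critical point $c\ne\xi$ and contains no preimage of $\xi$ other than $\xi$ itself; in all other situations it is simply connected.

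Next I would use \thref{rotationwithn}: for any $n$-th root of unity $\xi$, the rotation $I_\xi(z)=\xi z$ conjugates $\jn$ with itself and sends the fixed point $1$ to the fixed point $\xi$. Being a homeomorphism of $\rs$ commuting with $\jn$, it maps $A_{n,\alpha}^{*}(1)$ onto $A_{n,\alpha}^{*}(\xi)$, critical points of $\jn$ lying in the former onto those lying in the latter, and preimages of $1$ onto preimages of $\xi$. Thus all immediate basins $A_{n,\alpha}^{*}(\xi)$, $\xi^n=1$, are homeomorphic — so share the same connectivity — and the criterion of the previous paragraph at a general $\xi$ is equivalent to the one at $\xi=1$. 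Combining the two steps, the following are equivalent: (a) the immediate basins of all roots of $z^n-1$ under $\jn$ are multiply connected; (b) $A_{n,\alpha}^{*}(1)$ is multiply connected; (c) $A_{n,\alpha}^{*}(1)$ contains a critical point $c\ne1$ and no preimage of $z=1$ other than $z=1$ itself. This is precisely the assertion of the corollary.

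I do not expect a genuine obstacle here: the proof is essentially bookkeeping, and the only delicate point is making sure that \thref{trich} — phrased at $z=0$ — applies at the fixed point $\xi$, which is exactly what the affine conjugacy $T_\xi$ takes care of. In particular the argument is uniform in $\alpha\in\mathbb C$, since \thref{atmostonepoint}, \thref{rotationwithn} and \thref{trich} all remain valid for the two exceptional values $\alpha=\tfrac12$ and $\alpha=\tfrac{2n-1}{2n-2}$ at which $\jn$ has reduced degree (it is still a rational map having the $n$-th roots of unity as superattracting fixed points), so no separate treatment of those parameters is needed.
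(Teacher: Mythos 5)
Your argument is correct and is exactly the intended derivation: the paper imports this corollary from \cite{CCV1} without reproducing a proof, and the statement is indeed obtained by applying the trichotomy of \thref{trich} (after moving $\xi$ to the origin) together with \thref{atmostonepoint} and the rotational symmetry of \thref{rotationwithn}, which forces all the immediate basins to share the same connectivity. No gaps.
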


\begin{remark}
\thlabel{rem1}
An immediate attracting basin may only have connectivity $1$ or $\infty$ (see \cite{Bear}). Hence, if $A_{n,\alpha}^{*}(1)$ is multiply connected, then all the immediate basins of attraction corresponding to the roots of $f_n$ are infinitely connected. 
\end{remark}

The following theorem in \cite{Ta} is the critical criterion used to prove Theorem A (see also \cite{Head}).
\begin{theorem}[\cite{Ta}, Theorem 2.2]
\thlabel{tanlei}
Any rational map $F$ of degree $d$ having $d$ distinct superattracting fixed points is conjugate by a M{\"o}bius transformation to $N_P$ for a polynomial of degree $d$. Moreover, if $z=\infty$ is not superattracting for $F$ and $F$ fixes $\infty$, then $F=N_P$ for some polynomial $P$ of degree $d$.
\end{theorem}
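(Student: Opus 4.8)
The plan is to attach to $F$ the auxiliary rational function $H(z)=\dfrac{1}{z-F(z)}$ and recognise it, by a partial‑fraction / Liouville argument, as the logarithmic derivative $P'/P$ of the polynomial whose roots are the superattracting fixed points; this is equivalent to $z-F(z)=P(z)/P'(z)$, i.e.\ $F=N_P$. Throughout $d\ge 2$ (a map of degree $1$ has no superattracting fixed point, so the hypothesis is vacuous). First I would reduce to the normalised situation. Let $a_1,\dots,a_d$ be the distinct superattracting fixed points of $F$; each has multiplier $0\neq 1$, hence is a simple fixed point. A rational map of degree $d$ has exactly $d+1$ fixed points counted with multiplicity, so there is precisely one more fixed point $a_0$, and it is simple as well (its multiplicity is $(d+1)-d=1$). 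The holomorphic fixed point formula $\sum 1/(1-\la_j)=1$ over all fixed points then gives $d\cdot\frac{1}{1-0}+\frac{1}{1-\la_0}=1$, whence $\la_0=\frac{d}{d-1}$; in particular $\la_0\notin\{0,1\}$, so $a_0$ is neither superattracting nor parabolic. If $a_0\neq\infty$, conjugate $F$ by a Möbius map $M$ with $M(a_0)=\infty$: this preserves the degree and the number of distinct superattracting fixed points (now all in $\bc$) and makes $\infty$ a non‑superattracting fixed point. In the "moreover" hypothesis, $\infty$ is already a non‑superattracting fixed point, so by the count it must be $a_0$ and its multiplier is automatically $\frac{d}{d-1}$. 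Hence it suffices to prove: if $F$ fixes $\infty$ with multiplier $\frac{d}{d-1}$ and has $d$ distinct finite superattracting fixed points $a_1,\dots,a_d$, then $F=N_P$ with $P(z)=\prod_{i=1}^{d}(z-a_i)$.

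\textbf{The identification.} Put $H(z)=\dfrac{1}{z-F(z)}$ and $R(z)=\dfrac{P'(z)}{P(z)}=\sum_{i=1}^{d}\dfrac{1}{z-a_i}$, both rational on $\rs$; I claim $H=R$. The poles of $H$ in $\bc$ are exactly the finite fixed points of $F$, namely $a_1,\dots,a_d$ (the remaining fixed point sits at $\infty$). Near $a_i$, superattractivity gives $F(z)-a_i=O\big((z-a_i)^2\big)$, so $z-F(z)=(z-a_i)\big(1+O(z-a_i)\big)$ has a simple zero and $H$ has a simple pole at $a_i$ with residue $1$ — precisely the singular part of $R$ there. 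Thus $H-R$ is holomorphic on all of $\bc$. At $\infty$, the multiplier $\frac{d}{d-1}$ forces $F(z)=\frac{d-1}{d}z+O(1)$, hence $z-F(z)=\frac1d z+O(1)\to\infty$ and $H(z)\to 0$; also $R(z)\to 0$. So $H-R$ is a rational function holomorphic on $\rs$, therefore constant, and equal to its value $0$ at $\infty$. Hence $H\equiv R$, i.e.\ $z-F(z)=P(z)/P'(z)$, i.e.\ $F=N_P$; undoing the conjugation by $M$ gives the general statement.

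\textbf{Main obstacle.} The construction is short; the only delicate point — and the one place where the hypotheses enter non‑locally — is the behaviour at infinity. Merely knowing that $\infty$ is a non‑superattracting fixed point does not give $H(\infty)=0$ (this would fail if $\infty$ were parabolic); what rescues the argument is the holomorphic fixed point formula, which simultaneously forces the total fixed‑point count to be $d+1$ (so the extra fixed point is simple) and pins its multiplier to $\frac{d}{d-1}\neq 1$. I would also record the converse as a consistency check: for a degree‑$d$ polynomial $P$ with distinct roots $a_1,\dots,a_d$, the numerator and denominator of $N_P=\frac{zP'-P}{P'}$ have no common root, so $\deg N_P=d$; since $N_P'=PP''/(P')^2$ vanishes at each simple root of $P$, the $a_i$ are superattracting fixed points, and $N_P$ fixes $\infty$ with multiplier $\frac{d}{d-1}$. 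This shows the characterisation is sharp and that the normalisation used in the "moreover" part is the natural one.
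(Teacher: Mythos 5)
Your proof is correct: the fixed-point count plus the holomorphic index formula pins the extra fixed point as simple with multiplier $\frac{d}{d-1}$, and the residue comparison of $\frac{1}{z-F(z)}$ with $P'/P$ at the $d$ simple superattracting fixed points (each residue equal to $1$) together with vanishing at $\infty$ forces $z-F(z)=P/P'$, i.e.\ $F=N_P$. The paper does not prove this statement but cites it from Tan Lei (see also Head), and your argument is essentially the classical one given there, so there is nothing to flag beyond noting that simplicity of the extra fixed point already excludes the parabolic case at $\infty$, even without computing the multiplier exactly.
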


\section{Proof of Theorem A}
\label{sec:ThA}
We start with a proposition that describes two curves in the dynamical plane of $\jn$. These curves are used in the proof of Theorem A, as part of a quasiconformal surgery construction. The proof follows closely an argument made in the proof of \cite[Proposition 3.1]{CCV1}.
\begin{prop}
\thlabel{propcurves}
Let $\jn$ such that $\immo$ is infinitely connected. Then there exist $\Gamma$ and $\Gamma^{-1}$, analytic Jordan curves in $\immo$ which surround $z=1$, such that $\jn|_{\Gamma^{-1}}: \Gamma^{-1} \to \Gamma$ is a two-to-one map with $\Gamma \subset {\rm Int}(\Gamma^{-1})$.
\end{prop}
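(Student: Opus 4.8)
The plan is to mimic closely the construction in the proof of \cite[Proposition 3.1]{CCV1}, exploiting the trichotomy of \thref{trich} and \thref{cortrich}. Since $\immo$ is infinitely connected, by \thref{cortrich} the immediate basin $\immo$ contains a free critical point $c \ne 1$ and no preimage of $z=1$ other than $z=1$ itself. I would first fix a Böttcher coordinate $\phi$ near the superattracting fixed point $z=1$ (which has local degree $3$), conjugating $\jn$ near $z=1$ to $w \mapsto w^3$ on a small disk, and pull back the round circle $\{|w|=r\}$ to obtain an analytic Jordan curve $\gamma_0 \subset \immo$ around $z=1$ on which $\jn$ acts as a degree-$3$ covering onto a smaller such curve. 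The standard way to produce the desired pair of curves is to enlarge the Böttcher domain by pulling back along the branches of $\jn$ that stay in $\immo$: one follows the increasing equipotentials until the first obstruction, which is the free critical point $c$.

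The key steps, in order: (i) Set up the Böttcher coordinate $\phi$ at $z=1$ and its maximal domain of univalence $V \subset \immo$; because $\immo$ contains exactly one extra critical point $c$ and no other preimage of $1$, the boundary $\partial V$ is the equipotential passing through $c$ (a figure-eight type curve through the critical point), exactly as in the cited proof. (ii) Choose an equipotential level strictly below that of $c$; its preimage under $\phi$ is an analytic Jordan curve $\Gamma \subset V \subset \immo$ surrounding $z=1$, and $\jn$ maps it as a $3$-to-$1$ covering onto an inner equipotential curve. (iii) Take $\Gamma^{-1}$ to be the connected component of $\jn^{-1}(\Gamma)$ inside $\immo$ that contains $z=1$ in its interior; I must check this component is a single analytic Jordan curve surrounding $z=1$ with $\Gamma \subset {\rm Int}(\Gamma^{-1})$, and that the restriction $\jn|_{\Gamma^{-1}}\colon \Gamma^{-1}\to\Gamma$ is \emph{two-to-one}. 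The degree count is the crucial point: locally near $z=1$ the map has degree $3$, so a priori the natural component of $\jn^{-1}(\Gamma)$ in $V$ wraps $3$ times; to get a two-to-one curve I would instead choose $\Gamma$ and $\Gamma^{-1}$ so that $\Gamma^{-1}$ encircles $z=1$ but $\Gamma$ is positioned so that exactly one of the three ``petals'' of $\jn^{-1}(\Gamma)$ near $z=1$, together with a reorganisation coming from the critical point $c$ being enclosed, produces the claimed degree. Concretely, the critical value structure forces the correct component of $\jn^{-1}(\Gamma)$ enclosing both $z=1$ and (a neighbourhood related to) $c$ to map with degree $2$; this is the content of the Riemann–Hurwitz bookkeeping in \cite[Proposition 3.1]{CCV1}, which I would adapt verbatim.

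The main obstacle I anticipate is precisely this degree-$2$ claim: one must argue that the relevant preimage curve $\Gamma^{-1}$ captures exactly two critical points of $\jn$ counted with multiplicity in its interior region mapping to ${\rm Int}(\Gamma)$ — namely a simple sheet plus the contribution forced by $z=1$ having local degree $3$ minus what leaks out through the critical point $c$ lying on a higher equipotential. Getting the enclosures consistent (that $\Gamma \subset {\rm Int}(\Gamma^{-1}) \subset \immo$, with no other preimage of $\Gamma$ interfering, which uses the ``no other preimage of $z=1$'' hypothesis) and verifying, via Riemann–Hurwitz on the annulus ${\rm Int}(\Gamma^{-1})\setminus \overline{{\rm Int}(\Gamma)}$, that the covering degree is $2$ rather than $3$, is where the real work lies; everything else (analyticity of the curves, the fact that they are Jordan, that they surround $z=1$) follows from standard Böttcher-coordinate arguments. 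I would also record that $\Gamma$ and $\Gamma^{-1}$ can be taken with level parameters as close to that of $c$ as desired, since later the surgery will need them to bound an annulus containing no critical points other than the ones accounted for.
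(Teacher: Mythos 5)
Your overall strategy coincides with the paper's: B{\"o}ttcher coordinates at $z=1$, the maximal domain $U$ whose boundary meets the free critical point $c_1$, an equipotential $\Gamma$, and a Riemann--Hurwitz count for its preimage. But the one step you yourself flag as ``where the real work lies'' --- that $\jn|_{\Gamma^{-1}}$ has degree $2$ rather than $3$ --- is precisely the content of the proposition, and the mechanism you sketch for it is not the right one. The actual argument needs two ingredients that are absent from your proposal. First, writing $\gamma=\partial \jn(U)$ and letting $\gamma^{-1}\cap\immo=\gamma_1\cup\gamma_2$ be the figure-eight preimage through $c_1$ (with $z=1$ inside $\gamma_2$), one must decide between $\partial U=\gamma_2$ and $\partial U=\gamma_1\cup\gamma_2$; the first case is ruled out because it would put a preimage of $z=1$ other than $z=1$ itself inside $\immo$, contradicting \thref{cortrich}. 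This dichotomy is where the infinite-connectivity hypothesis is genuinely used beyond supplying the critical point, and you simply assert the figure-eight picture. Second, once $\partial U=\gamma_1\cup\gamma_2$, the lobe $W$ bounded by $\gamma_1$ and not containing $z=1$ maps over $\rs\setminus U$ and hence contains a pole, which is simple by the rotational symmetry and the degree count at $z=0$ and $z=\infty$. Consequently the third sheet of $\jn^{-1}(\Gamma)\cap\immo$ is a separate Jordan curve inside $W$ mapping with degree $1$, and the remaining component $\Gamma^{-1}$ --- which surrounds the \emph{entire} figure-eight, not just $z=1$ --- maps with degree $3-1=2$. Your picture of discarding one of three ``petals'' at $z=1$, with degree ``leaking out through $c$,'' does not produce this: the missing degree sits in the pole-containing lobe, and without locating that pole you cannot conclude the count.

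A second, smaller gap: you only require $\Gamma$ to lie at an equipotential level strictly below that of $c_1$. If $\Gamma$ is taken inside $\jn(U)$, the component of $\jn^{-1}(\Gamma)$ surrounding $z=1$ is again an equipotential inside $U$ and the restriction is $3$-to-$1$. One must take $\Gamma$ in the fundamental annulus $U\setminus\overline{\jn(U)}$, so that the relevant preimage lies outside $\textrm{Fill}(U)$; then the Riemann--Hurwitz argument applied to the annulus bounded by $\gamma$ and $\Gamma$ (which contains no critical values) shows its degree-two preimage is an annulus, whence $\Gamma^{-1}$ is an analytic Jordan curve with $\Gamma\subset{\rm Int}(\Gamma^{-1})$.
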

\begin{figure}
\hfill
\subfigure[$\partial U=\gamma_2$.]{\includegraphics[width=7cm]{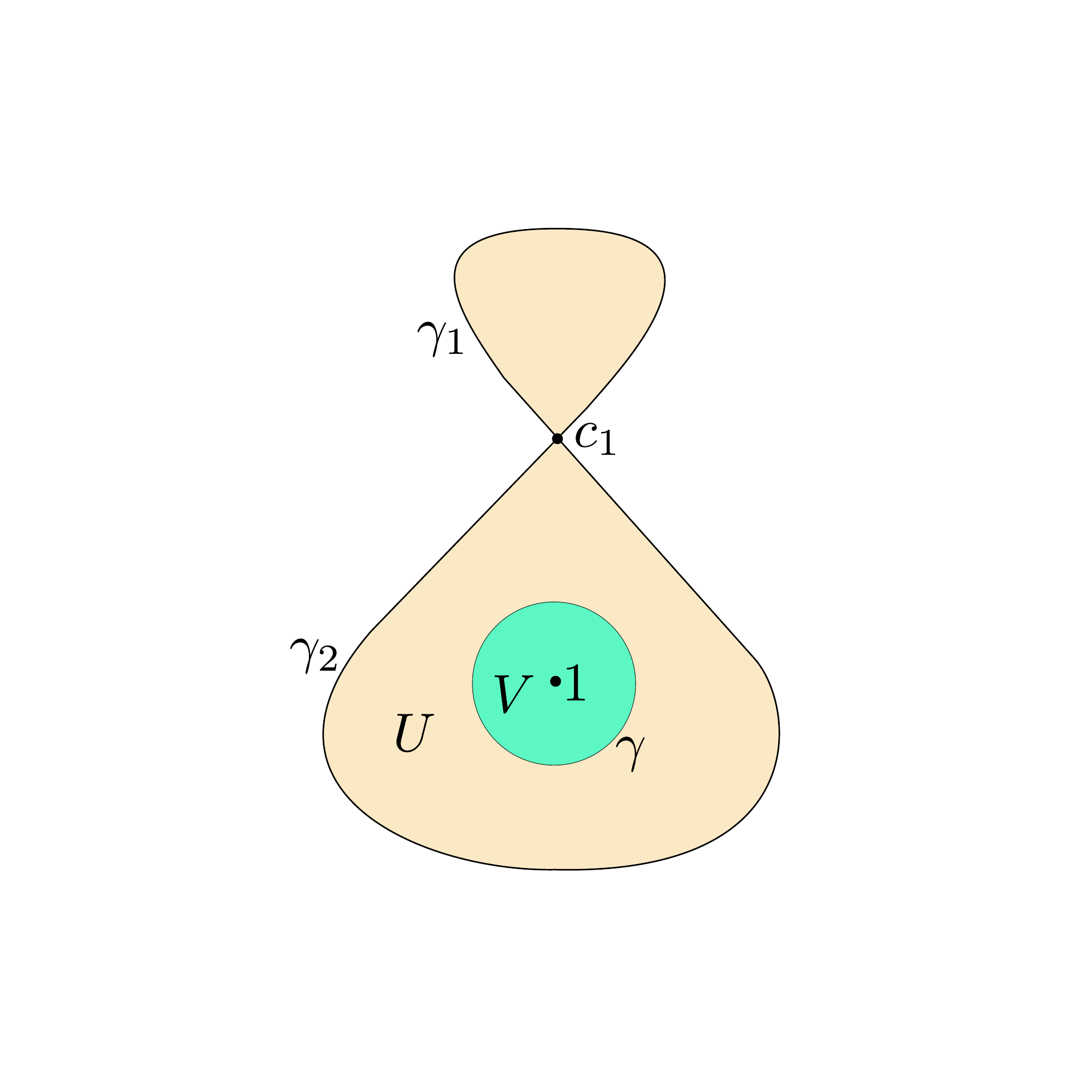}}
\hfill
\subfigure[$\partial U=\gamma_1 \cup \gamma_2$.]{\includegraphics[width=7cm]{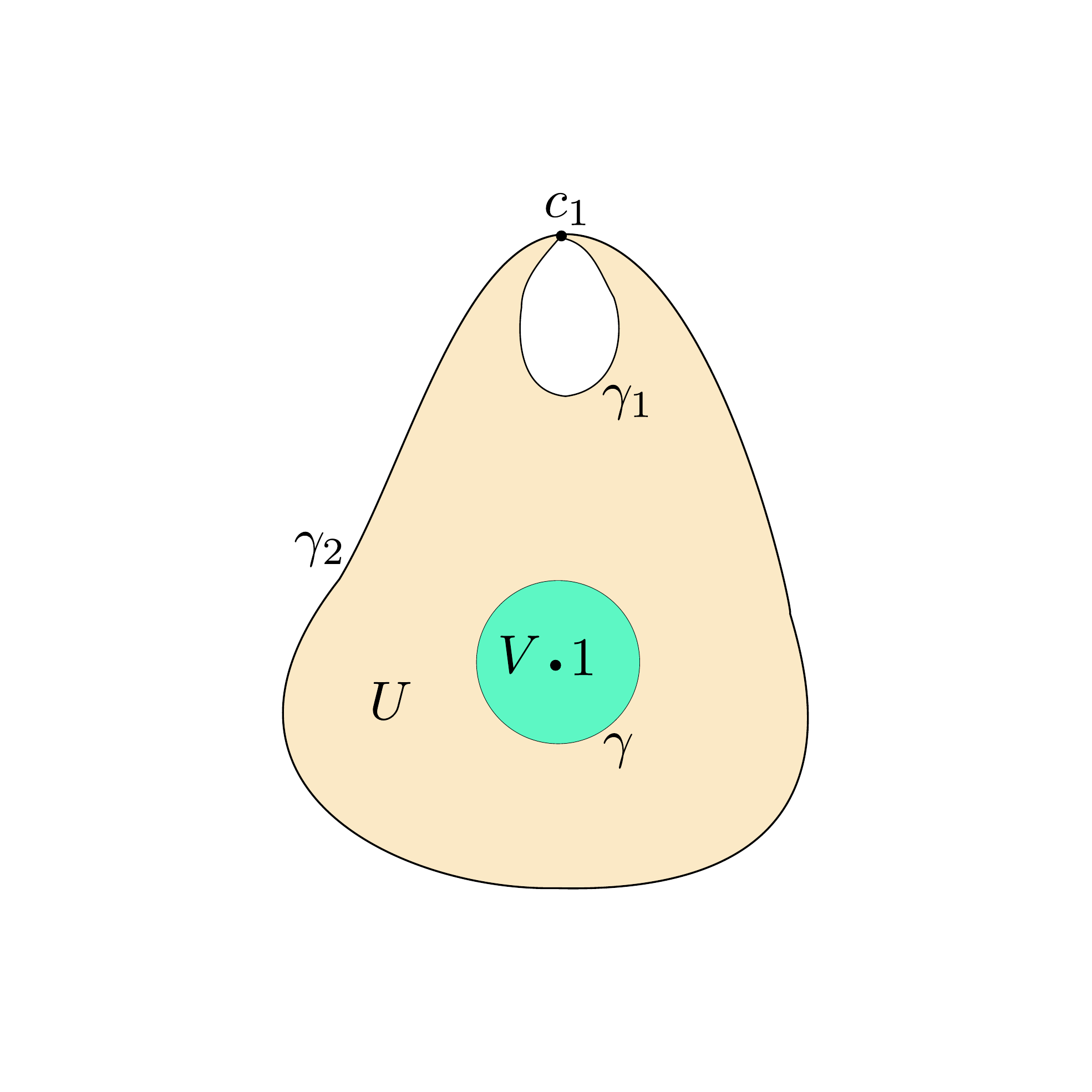}}
\hfill

\caption{The two possible configurations, of preimages of $\gamma$, described in \thref{propcurves}.}
\label{fig:posibilitati}
\end{figure}
\begin{figure}
\centering
\includegraphics[width=80mm]{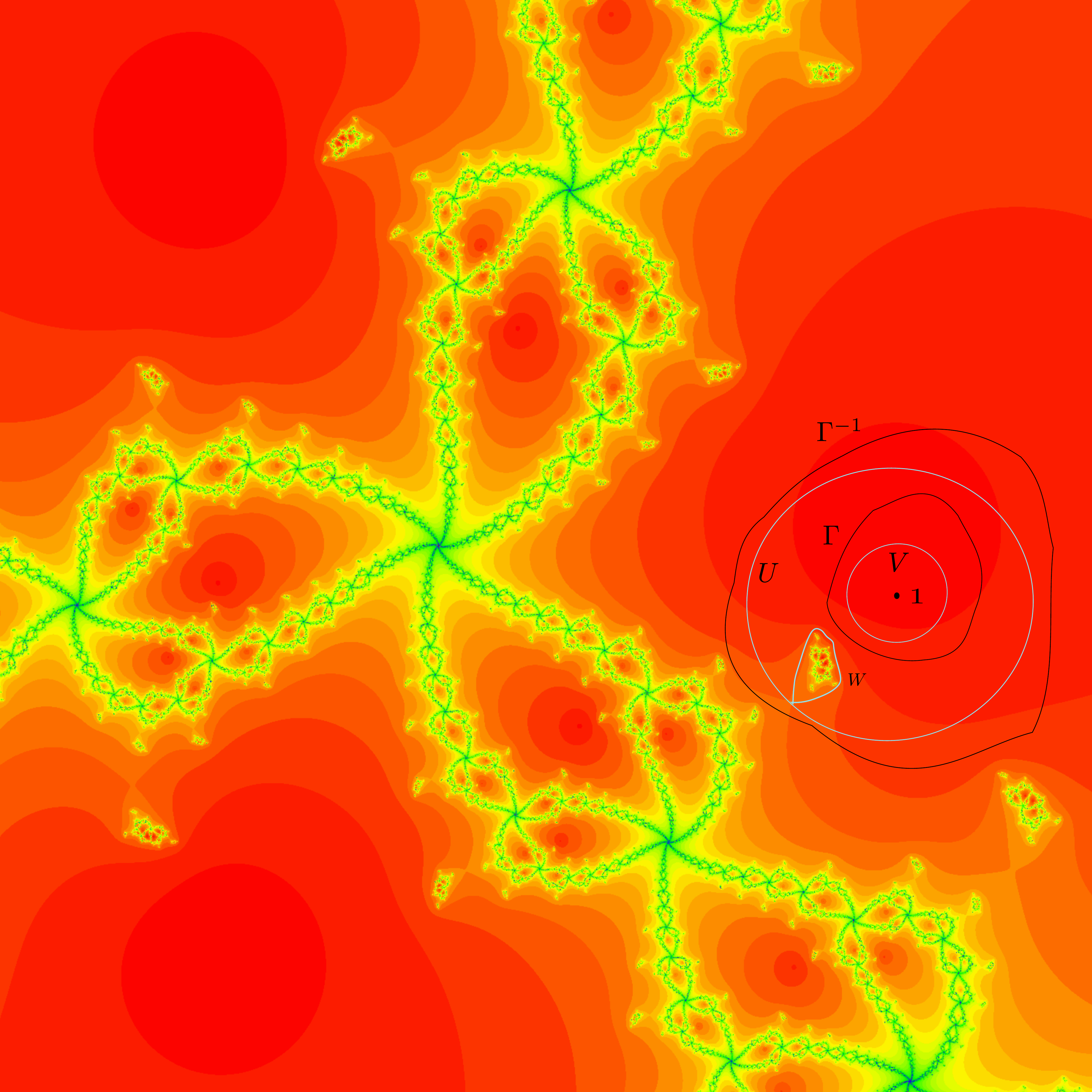}
\caption{Description of the situation in \thref{propcurves}, where $n=3$ and $\alpha=0.2+1.592i$.}
\label{fig:poc}
\end{figure}

\begin{proof}
Let $U$ be the maximal domain of definition of the B{\"o}ttcher coordinates of the superattracting fixed point $z=1$. By hypothesis and \thref{cortrich}, $\immo$ contains the critical point $c_1\coloneqq c_{n, \, a ,\, 1}$ \Big(see \eqref{exprcpjn} \Big), which lies on $\partial U$, and no other preimages of $z=1$.  Since $z=1$ has local degree $3$, the map $\jn|_{\immo} : \immo \to \immo$ has degree $3$. Let $V\coloneqq \jn (U)$. Then $\gamma\coloneqq \partial V$ is a Jordan curve.  Let $\gamma^{-1}$ be the preimage of $\gamma$ contained in $\immo$. Then $\gamma^{-1}= \gamma_{1} \cup \gamma_{2}$ is the union of two simple closed curves which intersect at the critical point $c_1$. Exactly one of the two curves, say $\gamma_{2} $, contains $z=1$ in the Jordan domain bounded by it. There exist two possibilities: either $\partial U=\gamma_{2}$, or $\partial U=\gamma_{1} \cup \gamma_{2}$ (see Figure \ref{fig:posibilitati}). Assume that $\partial U=\gamma_{2}$. By hypothesis, the critical point $c_1$ lies in $\immo$ and $\gamma_{1}$ is contained in the Fatou set. Therefore, $\gamma_{1}\subset \immo$ and there exists a preimage of $V$ which lies in $\textrm{Int}(\gamma_{1})$. Hence, $\immo$ contains a preimage of $z=1$ other than itself, which is impossible according to \thref{cortrich}. Consequently, we have that $\partial U=\gamma_{1} \cup \gamma_{2}$. Let $W$ be the connected component of $\rs\setminus \gamma_{1}$ which does not contain $z=1$. Then $W$ is mapped to an open set which contains $\rs \setminus U$, so $W$ contains a pole. Since $z=0$ is mapped to $z=\infty$ with degree $n-1$, $z=\infty$ is a fixed point, and the map has degree $2n$, there remain exactly $n$ preimages for $z=\infty$. By symmetry, the pole in $W$ is simple; therefore, $\partial W$ is mapped onto $\partial V$ with degree $1$. Hence, $\gamma_{1}$ is mapped onto $\gamma$ with degree $1$ and $\gamma_{2}$ is mapped onto $\gamma$ with degree $2$.

Let $\Gamma$ be an analytic Jordan curve which surrounds $z=1$ such that $\Gamma \subset U\setminus V$, and let ${\mathcal{A}}$ be the open annulus bounded by $\Gamma$ and $\gamma$. Then $\mathcal{A}$ has precisely $3$ preimages in $\immo$. Since $\mathcal{A}$ does not contain any critical value, its preimages do not contain critical points. It follows from the Riemann-Hurwitz formula (see, for instance, \cite{Ste}) that any preimage of $\mathcal{A}$ is also an annulus. One preimage of $\mathcal{A}$ lies in $W$ and is mapped onto $\mathcal{A}$ with degree $1$. There exists precisely one other preimage of $\mathcal{A}$ in $\immo$, which we denote by $\mathcal{A}^{-1}$. It lies in $\immo \setminus \textrm{Fill}(U)$, surrounds $z=1$, and is mapped onto $\mathcal{A}$ with degree $2$. Let $\Gamma^{-1}$ be the boundary component of $\mathcal{A}^{-1}$ which is mapped onto $\Gamma$. Observe that $\Gamma^{-1}$ is an analytic Jordan curve. Since $\Gamma^{-1}$ surrounds $z=1$ and lies outside $U$, we have that $\Gamma \subset \textrm{Int}(\Gamma^{-1})$  (see Figure \ref{fig:poc}).
\end{proof}

The main tool used in the proof of Theorem A is quasiconformal surgery. For an introduction of the topic, we refer to \cite{BF}. The strategy of the proof is as follows. We start by defining a quasiregular map $f:A_{n,\alpha}^{*}(1)\to A_{n,\alpha}^{*}(1)$ on the immediate basin of attraction of $z=1$, which we later extend to a quasiregular map $F:\rs \to \rs$. Secondly, we construct a {\it symmetric} $F$-invariant Beltrami coefficient and prove, using the Integrability Theorem (see, for instance, \cite{BF}, Theorem 1.28), the existence of a map $N_P$ quasiconformally conjugate to $F$. Then, we use \thref{tanlei} to show that $N_P$ is a map obtained by applying Newton's method to a polynomial of degree $n$, and it is quasiconformally conjugated to $N_{f_{n}}$. Finally, we compare the filled Julia sets of $N_{f_{n}}$ and $\jn$.

\begin{proof}[Proof of Theorem A]
Let $0<\rho <1.$ Let ${\it R}:\textrm{Int}(\Gamma)\to \mathbb{D}_{\rho^2}$ be a Riemann map such that ${\it R}(1)=0$. Since $\Gamma$ is an analytic curve, the Riemann map ${\it R}$ extends analitically to the boundary (see, for instance, \cite{BF}, Theorem 2.9c). Let $\psi_2:\Gamma\to \mathbb{S}_{\rho^2}$ be the restriction of ${\it R}$ to its boundary. Let $\psi_1:\Gamma^{-1} \to \mathbb{S}_{\rho}$ be an analytic lift map such that $\psi_2 \big( \jn (z)\big) = \big(\psi_1(z)\big)^2$. Let $\mathcal{A}=\textrm{Int}(\Gamma^{-1})\setminus \overbar{\textrm{Int}(\Gamma)}$ and $\mathcal{A}_{\rho^2, \rho}=\mathbb{D}_{\rho}\setminus \overbar{\mathbb{D}_{\rho^2}}$. Let $\psi: \partial  \mathcal{A} \to \partial \mathcal{A}_{\rho^2,\,  \rho}$, such that $\psi|_{\Gamma^{-1}}=\psi_1$ and $\psi |_{\Gamma}=\psi_2$. Since $\psi_1$ and $\psi_2$ are analytic maps, $\psi$ extends quasiconformally to $\psi:\overbar{\mathcal{A}} \to \overbar{\mathcal{A}}_{\rho^2, \, \rho}$ (see, for instance, \cite{BF}, Proposition 2.30b).

We now define $f:\immo \to \immo$ quasiregular, as follows:

\[ 
f(z)\coloneqq \left\{
\begin{array}{ll}
      {\it R}^{-1} \big(({\it R}(z))^2\big) & \textrm{if }z \in \textrm{Int}(\Gamma), \\
      {\it R}^{-1} \Big( \big(\psi(z)\big)^2\Big) & \textrm{if }z\in  \overbar{\mathcal{A}},\\
      \jn(z) & \textrm{if }z \in A_{n,\alpha}^{*}(1)\setminus \overbar{\textrm{Int}(\Gamma^{-1})}. \\
\end{array} 
\right. 
\]
Now let $\xi\coloneqq e^{\frac{2 \pi i}{n}}$. We have that $I_{\xi^{j}}(z)=I_{\xi}^{j}(z)$, for $j \in \{0, \, 1, \, \dots\, \, n-1\}$, where $I_{\xi}$ is defined as in \thref{rotationwithn}. We extend to a quasiregular map $F:\rs \to \rs$ defined on the Riemann sphere, as follows:
\[
F(z)\coloneqq \left\{
\begin{array}{ll}
      I_{{\xi}^{j}}\circ f \circ I_{{\xi}^{j}}^{-1}(z) & z \in A_{n,\alpha}^{*}(\xi^j), \, j\in \{ 0, \, 1, \, ..., n-1\}, \\
      \jn(z) & \textrm{otherwise}. \\
\end{array}  
\right.
\]

Observe that $F$ is a quasiregular map which coincides with $\jn$ outside the immediate basins of the roots of the unity. We intend to construct an $F-$invariant and $I_{\xi}-$ invariant Beltrami coefficient $\mu$. We first define an $F$-invariant Beltrami coefficient, say $\mu_1$, in $\mmo$, as follows:
 \[
\mu_1(z)\coloneqq \left\{
\begin{array}{ll}
      \psi^{*}\mu_0(z)  & \textrm{if }z \in \mathcal{A},\\
      (F^m)^{*}\mu_0 (z)  & \textrm{if } F^{m-1} (z)\in \mathcal{A},\textrm{ for }m\ge 2,\\
      \mu_0(z) & \textrm{otherwise}.
\end{array} 
\right. 
\]
Observe that for $z \in \mathcal{A}$, we have that $\phi^* \mu _0 (z) =F^{*} \mu_0(z)$. Now, we extend the previous construction to the rest of the Fatou set, that is, the basins of attraction of the $n$-th root of the unity $\xi^j\ne 1$, for $1\le j \le n-1$. We define an $I_{\xi}-$invariant Beltrami coefficient in $A_{n,  \alpha}(\xi^{j})$:
 \[
\mu(z)\coloneqq \left\{
\begin{array}{ll}
    \mu_1(z)  & \textrm{if }z \in A_{n,  \alpha}(1),\\
    (I_{\xi^j}^{-1})^{*} \mu_1(z)  & \textrm{if }z \in A_{n, \alpha}(\xi),\\
    \mu_0(z) & \textrm{otherwise}.
\end{array} 
\right. 
\]

For $z \in A_{n,  \alpha}(\xi)$ we have that $$(F)^{*} \mu= (F)^{*} (I_{\xi^j}^{-1})^{*}\mu_1= (I_{\xi^j}^{-1}F)^{*}\mu_1=(F I_{\xi^j}^{-1})^{*} \mu_1=(I_{\xi^j}^{-1})^{*} F^{*}\mu_1= (I_{\xi^j}^{-1})^{*} \mu_1=\mu.$$
It follows that $\mu$ is also $F-$invariant. By hypothesis, the map $\jn$ is hyperbolic, hence, the Julia set has measure $0$. Since $I_{\xi}^n(z)=z$, by construction, $\mu$ is both $F-$invariant and $I_{\xi}^{-1}-$invariant, with bounded dilatation. By the Integrability Theorem (see, for instance, \cite{BF}, Theorem 1.28), there exists $\phi_0:\rs\to \rs$ quasiconformal map such that $\phi_{0}^{*} \mu_0=\mu$. We normalize $\phi_0$ such that $\phi_0(0)=0$, $\phi_0(\infty)=\infty$, and $\phi_0$ is tangent to the identity at $\infty$. Let $\phi_{\xi}:=I_{\xi}\phi_0 I_{\xi}^{-1}$. We prove that $\phi_{\xi}$ and $\phi_0$ coincide by using the uniqueness part of the Integrability Theorem. First, we have that $\phi_{\xi}$ satisfies the same equation as $\phi_0$:
$$ \phi_{\xi}^{*}\mu_0=(I_{\xi}^{-1})^{*}\phi_0^{*}I_{\xi}^{*}\mu_0=(I_{\xi}^{-1})^{*}\phi_0^{*}\mu_0=(I_{\xi}^{-1})^{*}\mu=\mu.$$

We have that $\phi_{\xi}$ satisfies $\phi_{\xi}^{*} \mu_0 =\mu$, $\phi_{\xi}(\infty)=\infty$, $\phi_{\xi}(0)=0$, and $\phi_{\xi}$ is tangent to the identity at $\infty$. It follows from the uniqueness up to post-composition with M{\"o}bius transformations of the Integrability Theorem that $\phi_{\xi}=\phi_0$; therefore, $I_{\xi}\circ \phi_{0}=\phi_{0} \circ I_{\xi}$. 

Now let $N_P:\rs \to \rs$, $N_P:=\phi_{0}\circ F \circ \phi_{0}^{-1}$. Observe that, by construction, $N_P\circ I_{\xi}=I_{\xi} \circ N_P$. The map $N_P$ is quasiregular and satisfies $(N_P)^{*}\mu_0=\mu_0$, therefore, by Weyl's lemma (see, for instance, \cite{BF}, Theorem 1.14) it is holomorphic and quasiconformally conjugated by $\phi_0$ to $F$. Since $z= \infty$ is a fixed point of $F$ which is topologically repelling, $z= \infty$ is a repelling (therefore, not superattracting) fixed point of  $N_P$. It also follows from the conjugacy that $N_P$ has precisely $n$ distinct superattracting fixed points, the set $\{\xi^{j}\phi_0(1)\}$, where $j \in \{ 0,\, 1, \dots\, n-1\}$.

By \thref{tanlei}, the map $N_P$ is the map obtained by applying Newton's method to $$P(z)=\displaystyle{\prod_{j=0}^{n-1}\Big(z-\xi^j \phi_{0}(1)\Big)=z^n-\phi_0^n(1)}.$$Since $P$ and $f_n$ are linearly conjugated by $\eta(z)=\phi_{0}(1)z$, it can be proven (similarly to \thref{jordica}) that $N_P$ and $N_{f_n}$ are also linearly conjugated. Finally, the maps $F$ and $N_{f_{n}}$ are quasiconformally conjugated.

The Julia set of $N_{f_{n}}$, $J (N_{f_{n}})$, is connected (see \cite{Shi0}, Theorem 3.1). Moreover, by construction, $N_{f_{n}}$ and $\jn$ are quasiconformally conjugate in a neighborhood of $J (N_{f_{n}})$, by a conjugacy, say $\phi$. Since the conjugacy sends $\infty$ to $\infty$, we can conclude that there is an unbounded connected component $\Pi$ of $J (\jn)$, which is a quasiconformal copy of $J (N_{f_{n}})$. The fact that $\phi(J (N_{f_{n}}))$ is a connected component of $J ( \jn)$ follows from the surgery construction, since the surgery is done on the Fatou set of $\jn$.
\end{proof}

\section{Proof of Theorem B}
\label{sec:ThB}
We begin by studying the case of $n=2$. Let $\alpha>2$ and let $M_2(z) \coloneqq \frac{z+1}{z-1}$ be the M{\"o}bius transformation which maps the superattracting fixed points $z=1$ and $z=-1$, to $z=\infty$ and $z=0$. Finally, set $a=2(\alpha-1)>2$, and consider the map 
\begin{equation}
B_{a}(z)\coloneqq (M_2\circ\jndoi\circ M_{2}^{-1})=z^3\frac{z-a}{1-az}.
\end{equation}
The map $\jdoi(z)=z^3\frac{z-a}{1-az}$ is a rational map of degree $4$ studied in \cite{CFG1}, \cite{CFG2}, and \cite{CCV}. In \cite[Section 4]{CCV} it is proven that for $a\in \mathbb{C}$, $|a|>15.133$, $c_{+}\in \mmodoi$. More precisely, it is shown that $B_{a}(c_{+}) \in \immodoi$. We will prove that this is a sufficient condition for $\immodoi$ to be infinitely connected. Therefore, to prove Theorem B when $n=2$, it suffices to prove the statements for the family $\jdoi$, and by conjugacy, they hold for $\jndoi$.

The map $B_a$ is a rational map of degree $4$, and it is symmetric with respect to $\mathbb{S}^1$. The points $z=0$ and $z=\infty$ are superattracting fixed points of local degree $3$. Moreover, $z_{\infty}=\frac{1}{a}\in (0, \, 1)$ is a pole, and $z_0=a$ is a preimage of $z=0$. Consequently, there are two free critical points given by
\begin{equation}
\label{exprcp}
c_{\pm}= \frac{1}{3 a}\Big( 2+a^2\pm \sqrt{(a^2-4)(a^2-1)} \Big).
\end{equation}

The following lemma is a particular case of \cite[Proposition 4.5]{CCV}.

\begin{lemma}
\thlabel{computation2}
Let $a>1$. If $|z|>2a$, then $z \in \immodoi$. Equivalently, for $a>1$, we have that $\rs\setminus \overbar{ \mathbb{D}}(0, \, 2a) \subset \immodoi$.

\end{lemma}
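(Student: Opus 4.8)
The statement is equivalent to saying that the round neighbourhood of $\infty$ given by $\Omega\coloneqq\{z\in\rs:|z|>2a\}$ is contained in $\immodoi$. Since $\Omega$ is open, connected, and contains the superattracting fixed point $z=\infty$, it suffices to prove that $\Omega\subset\mmodoi$, i.e.\ that every orbit starting in $\Omega$ converges to $z=\infty$; then $\Omega$ lies in a single connected component of $\mmodoi$, and that component, containing $\infty$, is by definition $\immodoi$. The plan is therefore to show that $B_a(\Omega)\subset\Omega$ and that orbits in $\Omega$ escape to $\infty$, both via one elementary modulus estimate.

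\textbf{The key estimate.} For $|z|>2a$ with $a>1$ (so in particular $|z|>2$ and $a<|z|/2$), I would bound the numerator and denominator of $B_a(z)=z^3\dfrac{z-a}{1-az}$ separately: from $a<|z|/2$ we get $|z-a|\ge|z|-a>|z|/2$, and from $a|z|<|z|^2/2$ together with $1<|z|^2/2$ we get $|1-az|\le 1+a|z|<|z|^2$. Combining,
$$|B_a(z)|=\frac{|z|^3\,|z-a|}{|1-az|}>\frac{|z|^3\cdot|z|/2}{|z|^2}=\frac{|z|^2}{2}.$$
Note that $B_a$ has no pole in $\Omega$ (its only pole is $z=1/a$, with $|1/a|<1<2a$), so the iteration is well defined throughout $\Omega$.

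\textbf{Conclusion.} Since $|z|>2a$ gives $|z|^2/2>2a^2>2a$ (using $a>1$), the estimate yields $|B_a(z)|>2a$, hence $B_a(\Omega)\subset\Omega$ and every orbit in $\Omega$ stays in $\Omega$. For escape, take $z_0\in\Omega$, set $z_{k+1}=B_a(z_k)$ and $s_k\coloneqq|z_k|/2$; the inequality $|z_{k+1}|>|z_k|^2/2$ reads $s_{k+1}>s_k^2$, and since $s_0=|z_0|/2>a>1$ we get $s_k>s_0^{2^k}\to\infty$, so $|z_k|\to\infty$. Thus $\Omega\subset\mmodoi$, and by the reduction in the first paragraph $\Omega\subset\immodoi$, which is exactly $\rs\setminus\overbar{\mathbb{D}}(0,\,2a)\subset\immodoi$.

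\textbf{Main obstacle.} There is essentially none: the argument is a one-line growth estimate plus the standard topological fact that a connected subset of the basin containing the fixed point lies in the immediate basin. The only point requiring a moment's care is checking that $\Omega$ avoids the pole of $B_a$, so that the dynamics on $\Omega$ makes sense.
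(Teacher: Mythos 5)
Your proof is correct and follows essentially the same strategy as the paper: a direct modulus estimate on $B_a(z)=z^3\frac{z-a}{1-az}$ showing that the region $\{|z|>2a\}$ is forward invariant with orbits escaping to $\infty$ (the paper derives $|B_a(z)|>a|z|$, you derive $|B_a(z)|>|z|^2/2$; both give escape), followed by the observation that this connected neighbourhood of $\infty$ must lie in the immediate basin. Your write-up is in fact slightly more complete, since the paper leaves the escape and connectedness steps implicit.
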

\begin{proof}
If $|z|>2a$ then $$ |\jdoi(z)|=|z^3|\frac{|z-a|}{|1-az|}>|z-a||z|\frac{2a|z|}{|1-az|}>a|z| \frac{2a|z|}{1+a|z|}>a|z|.$$
Since $|\jdoi(z)|>|z|$, it follows that $z\in \immodoi$. 

\end{proof}

In the proof of Proposition 4.6 in \cite{CCV}, the authors show that for $ a \in \mathbb{C}$ with $|a|$ large enough (indeed $|a|>16$), we have $\jdoi(c_{+})\in \immodoi$. A similar proof was previously done in \cite[Lemma 2.6]{CFG2} for a family that includes $B_a$ (but without providing an explicit bound). Here we present an easier proof, only for real values of the parameter $a$.

\begin{lemma}
\thlabel{computation3}
If $a \in \mathbb{R}_{+}$ is large enough, then $\jdoi(c_{+})\in \immodoi$.

\end{lemma}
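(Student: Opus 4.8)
The plan is to reduce the statement to the escape criterion of \thref{computation2}: it is enough to prove that $|\jdoi(c_{+})|>2a$ for all sufficiently large $a\in\br_{+}$, since then $\jdoi(c_{+})\in\rs\setminus\overbar{\mathbb{D}}(0,2a)\subset\immodoi$.

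First I would locate the free critical point $c_{+}$ asymptotically. Setting $t\coloneqq\sqrt{(a^{2}-4)(a^{2}-1)}$, so that $t^{2}=a^{4}-5a^{2}+4$, one checks by squaring that $a^{2}-3<t<a^{2}-\tfrac52$ whenever $a>\sqrt5$. Substituting into \eqref{exprcp} yields
\[
\frac{2a^{2}-1}{3a}<c_{+}<\frac{2a^{2}-\tfrac12}{3a},
\]
that is, $c_{+}=\tfrac{2a}{3}-\tfrac{\theta}{3a}$ for some $\theta=\theta(a)\in(\tfrac12,1)$. In particular $c_{+}$ is real and positive, lies strictly between the pole $z=1/a$ and the $0$-preimage $z=a$, and $c_{+}\to\infty$ as $a\to\infty$.

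Next I would estimate the three factors of $\jdoi(c_{+})=c_{+}^{3}\cdot\dfrac{c_{+}-a}{1-ac_{+}}$ one-sidedly, using the two-sided bound above: $c_{+}^{3}\geq\tfrac{8a^{3}}{27}-\tfrac{4a}{9}$; $|c_{+}-a|=\tfrac{a}{3}+\tfrac{\theta}{3a}>\tfrac{a}{3}$; and, observing that $ac_{+}>1$ so $1-ac_{+}<0$, $|1-ac_{+}|=ac_{+}-1<\tfrac{2a^{2}}{3}$. Combining these,
\[
|\jdoi(c_{+})|=c_{+}^{3}\cdot\frac{|c_{+}-a|}{|1-ac_{+}|}>\Bigl(\frac{8a^{3}}{27}-\frac{4a}{9}\Bigr)\cdot\frac{a/3}{2a^{2}/3}=\frac{4a^{2}}{27}-\frac{2}{9}.
\]
Since $\tfrac{4a^{2}}{27}-\tfrac29>2a$ as soon as $4a^{2}-54a-6>0$, which holds for $a>14$ (say), we obtain $|\jdoi(c_{+})|>2a$, and \thref{computation2} concludes that $\jdoi(c_{+})\in\immodoi$.

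The only real work lies in the bookkeeping of the error terms — in particular getting the sign of $1-ac_{+}$ right (so that its modulus is $ac_{+}-1$) and keeping all inequalities one-directional so they compose cleanly. None of this is delicate: the output $|\jdoi(c_{+})|\sim\tfrac{4a^{2}}{27}$ beats the threshold $2a$ with room to spare, so I expect no genuine obstacle beyond elementary estimation. The essential input that makes such a soft argument possible is the explicit escape radius $2a$ supplied by \thref{computation2}.
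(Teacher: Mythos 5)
Your proposal is correct and follows the same route as the paper: both reduce the claim to the escape criterion $|z|>2a$ of \thref{computation2} and then bound $|\jdoi(c_{+})|$ from below by a quadratic in $a$, using the sign observation $1-ac_{+}<0$. Your tighter localization of $c_{+}$ (via $a^{2}-3<\sqrt{(a^{2}-4)(a^{2}-1)}<a^{2}-\tfrac52$) just yields the sharper constant $\tfrac{4a^{2}}{27}$ in place of the paper's $\tfrac{a^{2}}{162}$, hence the better threshold $a>14$ instead of $a>400$ — a cosmetic improvement, not a different argument.
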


\begin{proof}
It follows from \eqref{exprcp} that if $a>2$, then $ \frac{a}{2}<c_{+}<a$. Consequently, $1-ac_{+}<0$ and $|1-ac_{+}|<2a^2$. 

Also, we have that $$a^2<2a^2-2<2+a^2+ \sqrt{(a^2-4)(a^2-1)}<2a^2.$$ It follows from the right hand side of the inequality that 
$$ |2+a^2+ \sqrt{(a^2-4)(a^2-1)}-3a^2|>a^2.$$ Since $|\jdoi (c_{+})|=c_{+}^3 \frac{|c_{+}-a|}{|1-ac_{+}|} $, then
$$ |\jdoi(c_{+})|=\frac{1}{27 a^3}\Big( 2+a^2+ \sqrt{(a^2-4)(a^2-1)} \Big)^3 \frac{1}{ 3a}\frac{|2+a^2+ \sqrt{(a^2-4)(a^2-1)}-3a^2|} {|1-ac_{+}|}.$$
Hence, for $a>2$ we have
$$ |\jdoi(c_{+})|>\frac{1}{27 a^3} a^6  \frac{1}{3a} \frac{a^2}{2a^2}=\frac{a^2}{162}.$$
Clearly, for $a$ large enough ($a>400$), we have $\frac{a^2}{162}>2a$. According to \thref{computation2}, we conclude that $\jdoi(c_{+})\in \immodoi$.

\end{proof}
\begin{prop}
\thlabel{infconnn=2}
Assume $a\in \mathbb{R}_{+}$ is large enough such that \thref{computation3} applies. Then $c_{+}\in \immodoi$ and $\immodoi$ is infinitely connected.
\end{prop}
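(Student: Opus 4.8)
The plan is to show that $c_+ \in \immodoi$ and then to invoke the trichotomy of \thref{trich} to conclude infinite connectivity. First I would establish that $c_+$ lies in the \emph{immediate} basin of $z=\infty$, not merely in the full basin $\mmodoi$. By \thref{computation3}, for $a$ large enough we have $\jdoi(c_+) \in \immodoi$, so $c_+$ certainly belongs to $\mmodoi$. To upgrade this to $c_+ \in \immodoi$, I would use the symmetry of $B_a$ with respect to $\mathbb{S}^1$: this symmetry forces the two free critical points $c_\pm$ of \eqref{exprcp} to be symmetric (indeed, for $a$ real one checks $c_+ c_- = \frac{1}{a^2}(\text{something})$, and more importantly the involution $z \mapsto 1/\overbar{z}$ conjugates $B_a$ to itself, swapping the basins of $0$ and $\infty$). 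Since $\immodoiz$ must contain a critical point other than $z=0$ (every immediate basin contains a critical point), and the only candidates are $c_+$ and $c_-$, one of them lies in $\immodoiz$ and, by symmetry, the other lies in $\immodoi$. A short argument using $\frac{a}{2} < c_+ < a$ (so $c_+$ is real and positive, on the same side of $\mathbb{S}^1$ as $z=\infty$) pins down that it is $c_+$ which lies in $\immodoi$.

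Once $c_+ \in \immodoi$ is established, I would apply \thref{trich} with the roles $z=0 \rightsquigarrow z=\infty$ (after the coordinate change $z \mapsto 1/z$, or directly noting the trichotomy is stated for a superattracting fixed point and applies equally to $z=\infty$). By \thref{atmostonepoint} and the degree count (the map $B_a$ has degree $4$, with $z=0$ and $z=\infty$ superattracting of local degree $3$ — contributing two critical points each — leaving exactly the two free critical points $c_\pm$, one in each immediate basin), the immediate basin $\immodoi$ contains exactly one critical point other than $z=\infty$, namely $c_+$, so the hypothesis of \thref{trich} is met. It remains to rule out case (2) of the trichotomy, i.e.\ to show $\immodoi$ contains \emph{no} preimage of $z=\infty$ other than $z=\infty$ itself. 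The only pole of $B_a$ other than $z=\infty$ (which has local degree $3$ at infinity) is $z_\infty = \frac1a \in (0,1)$, which is the \emph{finite} preimage of $z=\infty$. So I must show $\frac1a \notin \immodoi$. This is where I would use that $z_\infty = \frac1a \in (0,1)$ lies inside the unit disk, hence by symmetry in the basin of $z=0$; more carefully, $z_0 = a$ is a preimage of $0$, and the symmetry $z\mapsto 1/\overbar z$ maps $z_\infty=\frac1a$ to $z_0 = a \in \immodoiz$ — wait, rather, it maps $\immodoi$ to $\immodoiz$, and $\frac1a$ maps to $a$; since $a$ is a genuine preimage of $z=0$ lying in $\immodoiz$ would contradict the corresponding trichotomy statement there, one sees $a \notin \immodoiz$, hence $\frac1a \notin \immodoi$. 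Thus case (3) of \thref{trich} holds and $\immodoi$ is multiply connected, hence infinitely connected by \thref{rem1} (or directly, since an immediate basin has connectivity $1$ or $\infty$).

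The main obstacle, I expect, is the bookkeeping around the symmetry argument: one must be careful that the anti-holomorphic involution $\iota(z) = 1/\overbar{z}$ conjugates $B_a$ to $B_a$ (this uses $a \in \mathbb{R}$, so $\overbar{a}=a$), that it interchanges $\immodoi$ and $\immodoiz$, that it sends $c_+$ to $c_-$ and $\frac1a$ to $a$, and to extract from these facts the precise conclusions ``$c_+ \in \immodoi$'' and ``$\frac1a \notin \immodoi$.'' An alternative, avoiding the anti-holomorphic map, is to argue purely with \thref{computation2}: since $c_+ \in (\frac a2, a)$ is a bounded real point while $\immodoi \supset \rs \setminus \overbar{\mathbb D}(0,2a)$, one checks that the segment of the real line from $c_+$ out to $+\infty$ can be shown to stay in the basin under iteration (using $|B_a(z)| > a|z|$ type estimates as in \thref{computation2}, \thref{computation3}), connecting $c_+$ to the immediate basin directly; and for the pole $\frac1a$, one notes its forward orbit does not escape (it maps to $\infty$ immediately, but a neighborhood of $\frac1a$ maps over a neighborhood of $\infty$, and the point $\frac1a$ itself is the issue — here one really does want the trichotomy's ``no other preimage of the fixed point'' phrasing, so I would lean on \thref{cortrich}-style reasoning transported to $z=\infty$). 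I would present the symmetry version as it is cleanest. Finally I would remark that, combined with \thref{jordica} and \thref{rotationwithn}, \thref{infconnn=2} yields Theorem B in the case $n=2$, since $B_a$ is conjugate to $\jndoi$ and the conjugacy carries $\immodoi$ to $A_{2,\alpha}^*(1)$.
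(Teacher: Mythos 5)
Your overall two-step plan (first place $c_{+}$ in $\immodoi$, then deduce multiple connectivity) matches the paper's, but both steps contain genuine gaps. The first is your claim that $\immodoiz$ ``must contain a critical point other than $z=0$.'' The general fact that every immediate basin contains a critical point is already satisfied by the superattracting fixed point $z=0$ itself, which is a critical point of multiplicity $2$ (local degree $3$); nothing forces $c_{+}$ or $c_{-}$ into $\immodoiz$, so the symmetry $z\mapsto 1/\overbar{z}$ --- which does swap the two immediate basins and the two free critical points --- cannot by itself locate either of them. The paper instead combines the input of \thref{computation3} with a pole count: since $\jdoi(c_{+})\in \immodoi$, the point $c_{+}$ lies in a component $U$ of $\mmodoi$ with $\jdoi(U)=\immodoi$; if $U\ne \immodoi$, then $U\to\immodoi$ is proper of degree at least $2$ (it contains a critical point), so $U$ would have to contain at least two preimages of $z=\infty$ counted with multiplicity, whereas $\deg \jdoi =4$ and $z=\infty$ already absorbs three of the four preimages, leaving only the simple pole $z_{\infty}=1/a$. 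Hence $U=\immodoi$.

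The second gap is in ruling out case (2) of \thref{trich}. You argue that $a\in\immodoiz$ ``would contradict the corresponding trichotomy statement,'' but it would not: case (2) explicitly allows the immediate basin to contain both a free critical point and a preimage of the fixed point, and then concludes \emph{simple} connectivity. So your argument only shows that case (2) or case (3) occurs, which does not determine the connectivity. (A correct exclusion is available: $\mathbb{S}^1$ is invariant and disjoint from the basin of $\infty$, so the connected set $\immodoi\ni\infty$ lies in $\{|z|>1\}$ and cannot contain $1/a\in(0,1)$; but that is not the argument you gave.) The paper avoids the trichotomy entirely at this stage: it joins $c_{+}$ to $\infty$ by an arc of $\immodoi$ in the upper half-plane, reflects it across $\mathbb{R}$ by Schwarz reflection (the coefficients are real), and notes that the resulting closed curve in $\immodoi$ separates $z_0=a$ --- which lies in $A_{B_a}(0)$ and hence not in $\immodoi$ --- from $z=0$, exhibiting a non-contractible curve and therefore multiple, hence infinite, connectivity.
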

\begin{proof}
Observe that, for $a>1$, we have that $0<z_{\infty}<z_0<2a$. By \thref{computation3}, $\jdoi(c_{+})\in \immodoi$. Therefore, the critical point $c_{+}$ lies either in $\immodoi$ or in a preimage of $\immodoi$.

Assume that $c_{+}$ lies in a preimage of $\immodoi$, distinct from $\immodoi$, say $U$. Since $U$ contains a critical point, it is mapped onto $\immodoi$ with degree at least $2$. Hence, $U$ contains at least $2$ preimages of $z=\infty$ (different from itself), a contradiction with $\deg(\jdoi)=4$, and $z= \infty$ being a superattracting fixed point with local degree $3$. 

Finally, let $f:[0, \, 1] \to  \rs$ be an injective map, analytic on $(0, \, 1)$ such that $f(0)=c_{+}$, $f(1)=\infty$, $\textrm{Im}(f(x))>0$, for $x\in (0, \, 1)$, and $f(x)\in \immodoi$. Let $\gamma \subset \immodoi$ be the graph of the function $f$. It follows from the Schwarz reflection principle that there exists $\gamma_1\subset \immodoi$ symmetric with respect to the real line to $\gamma$. Let $V$ be the subset of the Riemann sphere bounded by $\gamma$ and $\gamma_1$ and containing the interval $[c_{+}, \, \infty]$. Recall that $z_0 \in A_a(0)$. Since $a\in V$, and $\partial V \subset \immodoi$, we have that $\immodoi$ is multiply connected. Therefore, $\immodoi$ is infinitely connected.

\end{proof}

\begin{remark}
\thlabel{rem2}
It follows from \cite[Proposition 4.6]{CCV} that all $a \in \mathbb{C}$, with $|a|>15.133$, belong to the same hyperbolic component. Since the connectivity of $\immodoi$ is an invariant topological property within hyperbolic components, we conclude from \thref{infconnn=2} that if $|a|>15.133$, then $\immodoi$ is infinitely connected. This completes the proof of Theorem B for $n=2$.
\end{remark}

To finish the proof of Theorem B we now consider $n\ge 3$. As we did before, we consider a new map $\jnconju$ which is conjugated to $\jn$ via the M{\"o}bius map $M(z)= \frac{1}{z-1}$. More specifically, we consider $\jnconju:\rs\to \rs$, given by $\jnconju=M\circ \jn \circ M^{-1}.$ Since $M$ sends $z=1$ to $z=\infty$ and $z=\infty$ to $z=0$, it is clear from \eqref{exprjn} that $z=\infty$ is a superattracting fixed point of $\jnconju$ with local degree $3$ and $z=0$ is a fixed point of $\jnconju$. Doing some computations, the expression of $\jnconju$ writes as follows
\begin{equation}
\label{exprjnconju}
\jnconju (z)= \frac{2nz(z+1)^{n-1}[E_1(\alpha) z^{n}+E_2(\alpha)(z+1)^n]}{Q_{2n-3}^1(z)+\alpha Q_{2n-3}^2(z)}\eqqcolon\frac{zP(z)}{Q(z)},
\end{equation}
where $E_1(\alpha)\coloneqq -\alpha(n-1)$, $E_2(\alpha)\coloneqq \alpha (n-1)-n $, and $Q_{2n-3}^j (z) $, $j=1,\, 2$ are degree $2n-3$ polynomials, with coefficients independent of $\alpha$. We split the proof of the case $n \ge 3$ in several lemmas. We start by giving an estimate for a zero of $\jnconj$, which lies on the positive real line.
\begin{lemma}
\thlabel{zeron}
Let $\alpha>2$ and let $S(z)=E_1(\alpha)z^{n}+E_2(\alpha)(z+1)^n$ .Then $S\Big( \alpha(n-1) \Big)<0<S(\alpha n -\alpha -n)$. In particular, $ \jnconj$ has a zero on the interval $\Big(\alpha(n-1)-n, \, \alpha(n-1)\Big)$, for all $\alpha>2$.
\end{lemma}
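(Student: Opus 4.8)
The plan is to evaluate the polynomial $S(z) = E_1(\alpha)z^n + E_2(\alpha)(z+1)^n$ at the two endpoints $z = \alpha(n-1)$ and $z = \alpha n - \alpha - n = \alpha(n-1) - n$, show the sign change, and then invoke the Intermediate Value Theorem (equivalently, Darboux's theorem) on the interval $\big(\alpha(n-1)-n,\,\alpha(n-1)\big)$. Since $\jnconj$ has $zP(z)$ in its numerator with $P(z) = 2n(z+1)^{n-1}S(z)$, and since $z+1 > 0$ and $z \neq 0$ on this interval (as $\alpha > 2$ forces $\alpha(n-1)-n > 2(n-1)-n = n-2 \geq 1$ for $n \geq 3$), a zero of $S$ on this interval is exactly a zero of $\jnconj$ there.

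First I would compute $S\big(\alpha(n-1)\big)$. Here $E_1(\alpha)z^n = -\alpha(n-1)\cdot \big(\alpha(n-1)\big)^n$, which is negative, while $E_2(\alpha)(z+1)^n = \big(\alpha(n-1) - n\big)\cdot\big(\alpha(n-1)+1\big)^n$, which is positive for $\alpha > 2$. The point is that the dominant term as we think of these as polynomials in $\alpha$: both terms are of order $\alpha^{n+1}$, with leading coefficients $-(n-1)^{n+1}$ and $(n-1)\cdot(n-1)^n = (n-1)^{n+1}$ respectively — so the two leading terms cancel, and one must look at the next order. The cleaner route is to factor out $(n-1)^n$ and compare $-\alpha(n-1)\cdot\alpha^n$ against $\big(\alpha - \tfrac{n}{n-1}\big)\cdot\big(\alpha + \tfrac{1}{n-1}\big)^n$; expanding $\big(\alpha + \tfrac{1}{n-1}\big)^n = \alpha^n + \tfrac{n}{n-1}\alpha^{n-1} + \cdots$ shows $\big(\alpha - \tfrac{n}{n-1}\big)\big(\alpha+\tfrac{1}{n-1}\big)^n = \alpha^{n+1} + O(\alpha^{n-1})$ while $\alpha(n-1)\cdot\alpha^n/(n-1)^{?}$... — so I would instead just directly expand $S\big(\alpha(n-1)\big)$ as a polynomial in $\alpha$, observe the $\alpha^{n+1}$ terms cancel, and check that the next nonzero coefficient is negative, hence $S\big(\alpha(n-1)\big) < 0$ for $\alpha$ large; then separately verify it holds already for all $\alpha > 2$ by a crude bound (e.g.\ $\big(\alpha(n-1)+1\big)^n < \big(\alpha(n-1)\big)^n \cdot e^{n/(\alpha(n-1))}$ and comparing).

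Next I would compute $S\big(\alpha(n-1)-n\big) = S(\alpha n - \alpha - n)$. Here $E_2(\alpha) = \alpha(n-1) - n$ and $z + 1 = \alpha(n-1) - n + 1$, so the positive contribution is $\big(\alpha(n-1)-n\big)\cdot\big(\alpha(n-1)-n+1\big)^n$, while the negative contribution is $-\alpha(n-1)\cdot\big(\alpha(n-1)-n\big)^n$. Factoring out the common positive factor $\big(\alpha(n-1)-n\big)^n$ (positive since $\alpha(n-1)-n > 1$), the sign of $S$ at this point equals the sign of $\big(\alpha(n-1)-n\big)\big(1 + \tfrac{1}{\alpha(n-1)-n}\big)^n - \alpha(n-1)$; using $(1+x)^n \geq 1 + nx$ with $x = \tfrac{1}{\alpha(n-1)-n}$ gives a lower bound $\big(\alpha(n-1)-n\big) + n - \alpha(n-1) = 0$, but we need strict positivity, so I would use the strict inequality $(1+x)^n > 1 + nx$ for $x > 0$, $n \geq 2$, yielding $S\big(\alpha(n-1)-n\big) > 0$. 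This step is clean and holds for all $\alpha > 2$.

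The main obstacle I anticipate is the first endpoint computation: showing $S\big(\alpha(n-1)\big) < 0$ cleanly for all $\alpha > 2$ (not merely asymptotically), because there the two leading terms in $\alpha$ cancel exactly and one is comparing $\alpha(n-1)\cdot\big(\alpha(n-1)\big)^n$ with $\big(\alpha(n-1)-n\big)\cdot\big(\alpha(n-1)+1\big)^n$ — i.e.\ whether increasing the base by $1$ (raised to the $n$) outweighs the factor decrease from $\alpha(n-1)$ to $\alpha(n-1)-n$. Writing $u \coloneqq \alpha(n-1) > 2(n-1)$, this is the inequality $u^{n+1} > (u-n)(u+1)^n$, equivalently $\big(\tfrac{u+1}{u}\big)^n < \tfrac{u}{u-n}$, i.e.\ $\big(1+\tfrac1u\big)^n < 1 + \tfrac{n}{u-n}$. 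Since $\big(1+\tfrac1u\big)^n < e^{n/u}$ and, for $u$ large relative to $n$, $e^{n/u} < 1 + \tfrac{n}{u-n}$ fails to be obvious only for moderate $u$, I would handle it by noting $\big(1+\tfrac1u\big)^n \le \tfrac{1}{1-n/u} \cdot (\text{correction})$ — or, most robustly, since the lemma only requires "for all $\alpha > 2$" as stated but the downstream application (as in the $n=2$ case and Theorem B) only needs $\alpha$ large, I would prove it for $\alpha > \alpha_0(n)$ via the polynomial-degree argument (the $\alpha^{n-1}$-coefficient of $S\big(\alpha(n-1)\big)$ is readily checked to be negative), which suffices for Theorem B, and remark that the bound $\alpha > 2$ can be recovered with a slightly more careful but routine estimate.
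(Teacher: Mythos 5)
Your overall strategy --- evaluate $S$ at the two endpoints, exhibit the sign change, and conclude by the intermediate value theorem --- is exactly the paper's, and your treatment of the endpoint $z=\alpha(n-1)-n$ is complete: factoring out $\big(\alpha(n-1)-n\big)^n>0$ and applying the strict Bernoulli inequality $(1+x)^n>1+nx$ is equivalent to the paper's observation that after binomial expansion the top-degree term cancels and what remains is a sum of positive terms. Your remark that a zero of $S$ on the interval is genuinely a zero of $\jnconj$ because $z\neq 0$ and $z+1>0$ there is a point the paper does not even spell out.

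The gap is at the other endpoint: you do not actually prove $S\big(\alpha(n-1)\big)<0$ for all $\alpha>2$, only for $\alpha$ large, and you explicitly defer the stated range to ``a slightly more careful but routine estimate.'' Since the lemma asserts the inequality for every $\alpha>2$, that step must be closed. The paper closes it algebraically: writing $S(z)=-nz^n+[\alpha(n-1)-n]\sum_{k=0}^{n-1}\binom{n}{k}z^k$ and substituting $z=\alpha(n-1)$, the degree-$n$ term cancels and one gets
\[
S\big(\alpha(n-1)\big)=\sum_{k=0}^{n-2}\Big[\tbinom{n}{k}-n\tbinom{n}{k+1}\Big]\big[\alpha(n-1)\big]^{k+1}-n,
\]
where every bracketed coefficient is negative because $n\binom{n}{k+1}/\binom{n}{k}=n(n-k)/(k+1)\ge 2n/(n-1)>1$ for $0\le k\le n-2$. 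Alternatively, your own reduction to $u^{n+1}>(u-n)(u+1)^n$ with $u=\alpha(n-1)>n$ closes in one line by AM--GM applied to the $n+1$ positive factors $u-n,\,u+1,\dots,u+1$: their product is at most $\left(\frac{(u-n)+n(u+1)}{n+1}\right)^{n+1}=u^{n+1}$, and strictly so since the factors are not all equal. As written, however, your argument only establishes the lemma for $\alpha$ sufficiently large.
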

\begin{proof}
Direct computations show that $S$ writes as
\begin{equation}
\label{exprS}
S(z)=-nz^n+[\alpha (n -1)-n) ]\sum\limits_{k=0}^{n-1} {n \choose k} z^k. 
\end{equation}
On the one hand, $$S\Big(\alpha (n -1) -n\Big)= \sum\limits_{k=0}^{n-2} {n \choose k}(\alpha n -\alpha -n)^k>0.$$ 
On the other hand, $$S\Big(\alpha(n-1)\Big)=-n\Big[\alpha (n-1) \Big]^n+[\alpha (n -1) -n]\sum\limits_{k=0}^{n-1} {n \choose k}[\alpha(n-1) ]^{k}$$
$$=-n\Big[\alpha (n-1) \Big]^n+[\alpha (n -1) ]\sum\limits_{k=0}^{n-2} {n \choose k}[\alpha(n-1) ]^{k}+[\alpha(n-1)] n\Big[\alpha (n-1) \Big]^{n-1}-n\sum\limits_{k=0}^{n-1} {n \choose k}[\alpha(n-1) ]^{k} $$
$$=\sum\limits_{k=0}^{n-2} \Big[{n \choose k}-n{n \choose k+1}\Big][\alpha(n-1) ]^{k+1}-n{n \choose 0}<0.$$
\end{proof}

The following technical lemma will be useful later.
\begin{lemma}
\thlabel{lol}
Let $m, k\in \mathbb{N}^{*}$, $m>k$. Let $u, v_{j}\in \mathbb{C}$, $j=1, \dots, m$. If $|u|-\sum \limits_{j=1}^{m}|v_{j}|>0$, then $$\left|u-\sum \limits_{j=1}^{k}v_{j}\right|>\left|\sum \limits_{j=k+1}^{m}v_{j}\right|.$$
\end{lemma}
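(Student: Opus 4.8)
The plan is to combine the reverse triangle inequality with the ordinary triangle inequality, being careful about which direction each is used. First I would bound the left-hand side from below. Applying the reverse triangle inequality $|a-b|\ge |a|-|b|$ with $a=u$ and $b=\sum_{j=1}^{k}v_{j}$, and then the ordinary triangle inequality to the finite sum $\sum_{j=1}^{k}v_{j}$, gives
\[
\left|u-\sum_{j=1}^{k}v_{j}\right|\ \ge\ |u|-\left|\sum_{j=1}^{k}v_{j}\right|\ \ge\ |u|-\sum_{j=1}^{k}|v_{j}|.
\]

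Next I would bound the right-hand side from above, again by the triangle inequality:
\[
\left|\sum_{j=k+1}^{m}v_{j}\right|\ \le\ \sum_{j=k+1}^{m}|v_{j}|.
\]
Finally I would invoke the hypothesis. Since $m>k$, splitting $\sum_{j=1}^{m}|v_{j}|=\sum_{j=1}^{k}|v_{j}|+\sum_{j=k+1}^{m}|v_{j}|$ and rearranging the assumed inequality $|u|-\sum_{j=1}^{m}|v_{j}|>0$ yields
\[
|u|-\sum_{j=1}^{k}|v_{j}|\ >\ \sum_{j=k+1}^{m}|v_{j}|.
\]
Chaining the three displays gives $\left|u-\sum_{j=1}^{k}v_{j}\right|\ge |u|-\sum_{j=1}^{k}|v_{j}|>\sum_{j=k+1}^{m}|v_{j}|\ge\left|\sum_{j=k+1}^{m}v_{j}\right|$, which is the assertion.

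There is no genuine obstacle here: the statement is a bookkeeping consequence of the triangle inequality. The only point requiring attention is applying the \emph{reverse} triangle inequality to the quantity one wants to be large and the \emph{ordinary} triangle inequality to the quantity one wants to be small, so that the strict inequality coming from the hypothesis is preserved through the chain.
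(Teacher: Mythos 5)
Your proof is correct and follows essentially the same chain of inequalities as the paper: the reverse triangle inequality bounds the left side below by $|u|-\sum_{j=1}^{k}|v_j|$, the hypothesis gives the strict middle inequality, and the ordinary triangle inequality bounds the right side above. Nothing further is needed.
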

\begin{proof}
Since $|u|-\sum \limits_{j=1}^{m}|v_{j}|>0$, we have that $$\left|u-\sum \limits_{j=1}^{k}v_{j}\right|\ge|u|-\sum \limits_{j=1}^{k}|v_{j}|>\sum \limits_{j=k+1}^{m}|v_{j}|\ge \left|\sum\limits_{j=k+1}^{m}v_{j}\right| .$$ 
\end{proof}

We give a sufficient condition for points to lie in $\immR$.

\begin{lemma}
Let $\alpha>0$ large enough. If $|z|>n\alpha$, then $z \in \immR$.
\thlabel{nmodul}
\end{lemma}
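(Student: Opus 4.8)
The plan is to follow the strategy of \thref{computation2}: show that $|\jnconju(z)|>|z|$ whenever $|z|>n\alpha$ (once $\alpha$ is large enough), deduce that the forward orbit of every such $z$ escapes to the superattracting fixed point $z=\infty$, and then note that $\Omega\coloneqq\{z:|z|>n\alpha\}\cup\{\infty\}$ is connected and contains $z=\infty$, so $\Omega\subset\immR$. By \eqref{exprjnconju} we have $\jnconju(z)=zP(z)/Q(z)$ with $P(z)=2n(z+1)^{n-1}S(z)$ and $\deg Q=2n-3$; hence it suffices to prove $|P(z)|>|Q(z)|$ on $\Omega$, which in particular forces $Q(z)\neq 0$ there, so there are no poles to worry about. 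The upper bound for $Q=Q_{2n-3}^1+\alpha Q_{2n-3}^2$ is routine, since its coefficients are affine in $\alpha$: one gets $|Q(z)|\le C_n\alpha|z|^{2n-3}$ for $|z|\ge 1$ and $\alpha\ge 1$, with $C_n$ depending only on $n$.

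The main work, and the main obstacle, is a matching lower bound for $|P(z)|=2n|z+1|^{n-1}|S(z)|$, which reduces to estimating $S$. Write $S(z)=-nz^n+E_2(\alpha)T(z)$ with $E_2(\alpha)=\alpha(n-1)-n$ and $T(z)=\sum_{k=0}^{n-1}\binom{n}{k}z^k=(z+1)^n-z^n$, a polynomial of degree $n-1$ with coefficients independent of $\alpha$. Here a \emph{crude} estimate $|T(z)|\le 2^n|z|^{n-1}$ is not good enough: beating the second term of $S$ would then require $n^2>(n-1)2^n$, which already fails for $n=3$. One must instead exploit that the top monomial $nz^{n-1}$ of $T$ dominates, that is, $|T(z)|\le(n+1)|z|^{n-1}$ once $|z|\ge 2^n$. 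Then, for $|z|>n\alpha\ge 2^n$ (so $\alpha$ large), one gets $|S(z)|\ge n|z|^n-(n^2-1)\alpha|z|^{n-1}=|z|^{n-1}\bigl(n|z|-(n^2-1)\alpha\bigr)>|z|^n/n$, the last step using $\alpha<|z|/n$ and $n^2>n^2-1$; this narrow margin is the crux of the estimate.

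With this in hand, for $|z|>n\alpha$ and $\alpha$ large we have $|z+1|\ge|z|/2$, hence $|P(z)|\ge 2n\,(|z|/2)^{n-1}\cdot|z|^n/n=|z|^{2n-1}/2^{n-2}$, and therefore $|P(z)|/|Q(z)|\ge|z|^2/(2^{n-2}C_n\alpha)>n^2\alpha/(2^{n-2}C_n)>1$ for $\alpha$ large enough. Thus $|\jnconju(z)|=|z|\cdot|P(z)/Q(z)|>|z|$ on $\Omega$. As in the proof of \thref{computation2}, this makes $\Omega$ forward invariant under $\jnconju$, forces the moduli along each orbit in $\Omega$ to strictly increase, and rules out a finite limit (an $\omega$-limit point of maximal modulus would contradict $|\jnconju(w)|>|w|$); so every point of $\Omega$ converges to $z=\infty$, and since $\Omega$ is connected and contains $z=\infty$ we conclude $\Omega\subset\immR$.
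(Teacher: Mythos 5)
Your proof is correct, and it rests on the same core strategy as the paper's: establish $|\jnconju(z)|>|z|$ on $\{|z|>n\alpha\}$ for $\alpha$ large and conclude that this forward-invariant connected neighbourhood of $\infty$ lies in $\immR$. The difference is in how the dominance of $P$ over $Q$ is obtained. The paper expands $P$ into monomials, $P(z)=2n[-nz^{2n-1}+n\alpha(n-1)z^{2n-2}+P_{2n-2}(z)+\alpha P_{2n-3}(z)]$, and shows that the top monomial absorbs the $\alpha$-dependent subleading term (using $(n-1)|z|^{2n-1}>n\alpha(n-1)|z|^{2n-2}$, i.e.\ exactly $|z|>n\alpha$) together with everything else and $Q$, invoking the auxiliary \thref{lol} to convert the sum of term-by-term bounds into $|P|>2n|Q|$. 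You instead keep the factorization $P=2n(z+1)^{n-1}S(z)$ and bound $|S|$ below directly, which requires the sharper estimate $|T(z)|\le(n+1)|z|^{n-1}$ on $(z+1)^n-z^n$; the resulting margin $n|z|-(n^2-1)\alpha>|z|/n$ is the same tight cancellation the paper hides in its inequality (1), and your route makes it more visible (and shows why the radius $n\alpha$ cannot be much improved by this method). Your version also dispenses with \thref{lol}, and you are more explicit than the paper about the final step --- why $|\jnconju(z)|>|z|$ on an unbounded region actually forces convergence to $\infty$ (the $\omega$-limit argument), a point both \thref{computation2} and the paper's proof here leave implicit. One cosmetic remark: $|P(z)|>|Q(z)|$ does not by itself exclude zeros of $Q$ (a common zero of $P$ and $Q$ is excluded, but a zero of $Q$ alone is just a pole, which maps to $\infty\in\immR$ and is harmless); the paper handles this with the same parenthetical hand-wave, so nothing is lost.
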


\begin{proof}

We show that if $|z|>n\alpha$, then $|\jnconj (z)|>|z|$, which is a sufficient condition for $z \in \immR$. According to \eqref{exprjnconju}, we have to prove that, for $\alpha$ large enough, $\left| \frac{ P(z)}{Q(z)}\right|>1$. We write $P$ as
$$P(z)= 2n\Big[-nz^{2n-1}+n\alpha (n-1) z^{2n-2}+P_{2n-2}(z)+\alpha P_{2n-3}(z)\Big].$$
Observe that $P_{2n-2}(z)$ and $P_{2n-3}(z)$ are polynomials of degree $2n-2$ and $2n-3$, respectively, with coefficients independent of $\alpha$. For $\alpha$ large enough (recall that we are assuming $|z|>n\alpha$), the following statements hold:
\begin{enumerate}
\item{$(n-1)|z|^{2n-1}>n\alpha (n-1) |z|^{2n-2}$.}
\item{$\frac{1}{3}|z|^{2n-1}>|P_{2n-2}(z)|$, since \[\lim_{ \alpha \to \infty}\frac{n(n-1)|z|^{2n-2}}{|z|^{2n-1}}=0.\]}
\item{$\frac{1}{3}|z|^{2n-1} >|\alpha P_{2n-3}(z)|$, since \[\lim_{ \alpha \to \infty}\frac{P_{2n-3}(z)}{|z|^{2n-2}}=0.\]}
\item{$\frac{1}{3}|z|^{2n-1}>|Q(z)|$, since $Q_{2n-3}^1$ and $Q_{2n-3}^2$ are polynomials of degree $2n-3$ with coefficients independent of $\alpha$.}
\end{enumerate}
All together imply that
 $$n |z|^{2n-1}>n\alpha (n-1) |z|^{2n-2}+|P_{2n-2}(z)|+\alpha |P_{2n-3}(z)|+|Q(z)|.$$
By using \thref{lol} (remark that for $\alpha$ large enough, there is no root of $Q$ for $|z|>n\alpha$), we get that $$\left| \frac{ P(z)}{Q(z)}\right|=\left|2n \frac{-nz^{2n-1}+\alpha (n-1) z^{2n-2}+P_{2n-2}(z)+\alpha P_{2n-3}(z)}{Q(z)}\right|>2n>1.$$ 
Thus, for $|z|>n\alpha$, we have that $|\jnconj (z)|>|z|$ and $z\in \immR$.
\end{proof}

The following proposition concludes the proof of Theorem B.
\begin{prop}
\thlabel{finalprop}
Let $\alpha>0$ large enough. Then $\immR$ is infinitely connected.
\end{prop}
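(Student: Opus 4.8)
The plan is to mimic, for general $n\ge 3$, the strategy used for $n=2$ in \thref{infconnn=2}: produce an explicit curve (or pair of symmetric curves) contained in $\immR$ which separates a preimage of $z=0$ from $z=0$ itself, and then invoke \thref{cortrich} (via the conjugacy $M$) together with \thref{rem1}. By \thref{zeron} there is a zero $z_0$ of $\jnconj$ — hence a genuine preimage of the fixed point $z=0$ — lying on the real segment $\bigl(\alpha(n-1)-n,\,\alpha(n-1)\bigr)$, which for large $\alpha$ sits well inside the disc of radius $n\alpha$ where \thref{nmodul} does not directly apply. The point is that $z_0$ does \emph{not} lie in $\immR$ (since $\immR$ is the immediate basin of the superattracting fixed point $z=\infty$ and $z_0$ maps to the \emph{other} fixed point $z=0$), while $z=0$ also does not lie in $\immR$; if I can enclose $z_0$ by a Jordan curve $\Gamma\subset\immR$ with $0\in\mathrm{Ext}(\Gamma)$, then $\immR$ is multiply connected, hence infinitely connected by \thref{rem1}.

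The key steps, in order, are: (i) choose a vertical segment $I=\{\,z=c\alpha+it\alpha : t\in[-1,1]\,\}$ for a suitable constant $c$ strictly between the real part of $z_0$ and something that keeps the endpoints $z_{\pm}=c\alpha\pm i\alpha$ of modulus $>n\alpha$ — this forces $z_{\pm}\in\immR$ by \thref{nmodul} while $z_0\in\mathrm{Int}$ of the curve we are about to build; (ii) show $\jnconj(I)\subset\immR$ by the same modulus estimate, i.e. prove $|\jnconj(z)|>n\alpha$ for $z\in I$ and $\alpha$ large, which reduces (after factoring out $(z+1)^{n-1}$ and using $|z+1|^{2}\ge$ const$\cdot\alpha^{2}$ on $I$) to bounding the leading-in-$\alpha$ coefficient of a rational function of $t$ away from zero uniformly on the compact interval $t\in[-1,1]$ — exactly the $\kappa$-argument sketched in the (commented-out) $n=3$ case; (iii) conclude $I\subset\immR$ (it lies in $\immR$ or a preimage of it, but its endpoints and image are in $\immR$, so by connectedness it is $\immR$); (iv) close up $I$ with an arc $\gamma_{1}$ through the region $|z|>n\alpha$ (which is in $\immR$ by \thref{nmodul}) to a piecewise-smooth Jordan curve $\Gamma=I\cup\gamma_{1}\subset\immR$ with $z_0\in\mathrm{Int}(\Gamma)$ and $0\in\mathrm{Ext}(\Gamma)$, using \thref{zeron} to guarantee $\mathrm{Re}(z_0)$ is on the correct side of $c\alpha$; (v) deduce from $\partial\Gamma\subset\immR$ separating $z_0$ from $0$ that $\immR$ is multiply connected, hence (\thref{rem1}) infinitely connected, and translate back through $M$ to $A_{n,\alpha}^{*}(1)$.

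The main obstacle is step (ii): unlike the $n=2$ case, where everything is a low-degree rational function one can estimate by hand, here one must control $\jnconj$ along the whole segment $I$ for general $n$. Concretely, writing $\jnconj(z)=zP(z)/Q(z)$ with $P,Q$ as in \eqref{exprjnconju}, one substitutes $z=c\alpha+it\alpha$, expands numerator and denominator as polynomials in $\alpha$ (degree $2n-1$ and $2n-3$ respectively), and must check that the top-degree coefficient in $\alpha$ of the numerator is a nonvanishing function of $t\in[-1,1]$ whose modulus is bounded below, while the denominator's coefficients stay bounded above — so that $|\jnconj(z)|\gtrsim \alpha^{2}\gg n\alpha$. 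The nonvanishing amounts to checking that $E_1(\alpha)z^n+E_2(\alpha)(z+1)^n$ has no zero of the form $c\alpha(1+it/c)$ in the $\alpha\to\infty$ limit for the chosen $c$, i.e. that $-n c^n + (n-1)\alpha\cdot(\text{stuff})$ does not degenerate; choosing $c$ (for instance a value like $c=2+\tfrac1n$ or similar, to be fixed so that $|c\alpha\pm i\alpha|>n\alpha$ holds, which needs $c^2+1>n^2$ — so in fact $c$ must grow with $n$, e.g. $c=n$ works since then the endpoints have modulus $n\alpha\sqrt{2}>n\alpha$) makes this a finite computation in each case, and the uniform lower bound on $t\in[-1,1]$ follows by continuity and compactness. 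Once this estimate is in hand, steps (i), (iii)–(v) are essentially the same topological argument as in \thref{infconnn=2}, now carried out in the $\jnconju$-coordinate and transported back by $M$.
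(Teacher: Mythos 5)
Your overall strategy is exactly the paper's: locate a real zero $z_0$ of $\jnconj$ via \thref{zeron}, build a curve in $\immR$ out of a vertical segment $I$ (verified by the $\kappa$-estimate plus \thref{nmodul}) closed up through the region $|z|>n\alpha$, and separate $z_0$ from $0$ to get multiple connectivity, hence infinite connectivity by \thref{rem1}. Steps (ii), (iii) and (v) are carried out in the paper essentially as you describe them.

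However, your step (i) contains a genuine flaw that breaks step (iv). You take $I=\{c\alpha+it\alpha : t\in[-1,1]\}$, so the endpoints are $z_\pm=c\alpha\pm i\alpha$ and the condition $|z_\pm|>n\alpha$ forces $c>\sqrt{n^2-1}>n-1$. But \thref{zeron} places the zero at $z_0<\alpha(n-1)<\alpha\sqrt{n^2-1}<c\alpha$, so $z_0$ lies strictly to the \emph{left} of the line $\mathrm{Re}(z)=c\alpha$, on the same side as $0$; the curve $\Gamma=I\cup\gamma_1$ then cannot separate them. Your proposed $c=n$ is even worse: every point of that segment satisfies $|z|=\alpha\sqrt{n^2+t^2}\ge n\alpha$, so $I$ never enters the disc $\overline{\mathbb{D}}(0,n\alpha)$ at all and $\Gamma$ encloses no point of that disc. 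The two requirements (real part of $I$ below $\mathrm{Re}(z_0)$, endpoints of modulus $>n\alpha$) are incompatible as long as the half-height of $I$ is only $\alpha$. The correct resolution, which is what the paper does, is to scale the \emph{vertical} extent with $n\alpha$ rather than pushing $c$ up: take $I=\{n\alpha(\tfrac12+it): t\in[-1,1]\}$, so that the real part is $\tfrac{n\alpha}{2}<\alpha(n-1)-n<z_0$ for $n\ge 3$ and $\alpha$ large, while the endpoints $\tfrac{n\alpha}{2}\pm in\alpha$ have modulus $\tfrac{\sqrt5}{2}n\alpha>n\alpha$. With that choice your $\kappa$-estimate goes through (the relevant leading coefficient $c(t)=2n^{2n-1}(\tfrac12+it)^{2n-2}[-n(\tfrac12+it)+n-1]$ is nonvanishing on $[-1,1]$ precisely because $n\ge 3$ makes $\tfrac n2-1>0$), and the rest of your argument is correct.
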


\begin{figure}

\centering
\includegraphics[width=80mm]{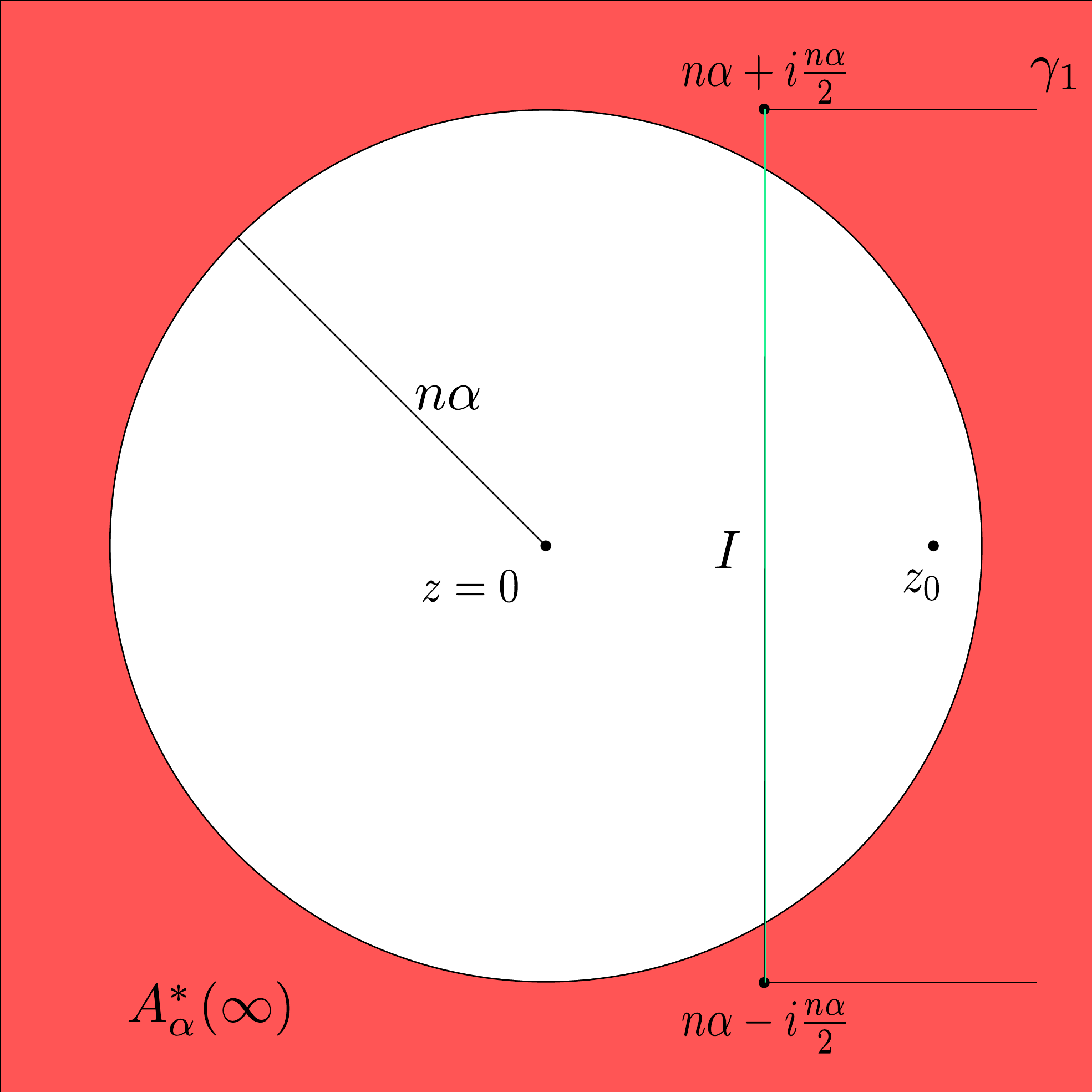}
\caption{Description of the situation in proof of \thref{finalprop}. The zero $z_0$ is separated by $(I \cup \gamma_1)\subset \immR$ from $z=0$. Therefore, $\immR$ is multiply connected.}
\label{fig:teoremaB}
\end{figure}
\begin{proof}
If $z \in(0, \, \alpha n-\alpha-n)$, then $n z^n<(\alpha n -\alpha -n)n z^{n-1}$. It follows that $S(z)$ (see \eqref{exprS}) has no zeros in $(0, \, \alpha n-\alpha-n)$. In particular, $\jnconj$ has no zeros in $(0, \, \alpha n-\alpha-n)$. Let $$I=\Big\{z \in \mathbb{C} |z=n\alpha\left(\frac{1}{2}+it\right) \Big\}, \, t \in [-1, \, 1].$$ We claim that $\jnconj (I)\subset \immR$.

Let $\jnconjt(z)\coloneqq \frac{1}{(1+z)^2}\jnconj (z) $. Firstly, we prove that there exists a constant $\kappa>0$ such that for $z\in I$, we have that $\Big|\jnconjt(z)\Big|>\kappa$. A direct computation shows that 
$$\left|\jnconjt\left(n\alpha\left(\frac{1}{2}+it\right) \right)\right|\coloneqq \frac{N(\alpha)}{M(\alpha)},$$
where $N$ and $M$ are polynomials of degree $2n-2$ in $\alpha$ with coefficients depending on $t$. Moreover, if we denote by $c(t)$ the degree $2n-2$ coefficient of $N$, we have:
$$c(t)=2n^{2n-1}\left(\frac{1}{2}+it\right)^{2n-2}\left[-n\left( \frac{1}{2}+it\right)+n-1\right].$$ Observe that $$\min_{t\in[-1, \, 1]} \Big|c(t)\Big|=\Big|c(0)\Big|\coloneqq C>0.$$ We denote by $d(t)$ the degree $2n-2$ coefficient of $M$. Let $D\coloneqq \max\limits_{t\in[-1, \, 1]}|d(t)|,$ and let $\kappa \coloneqq\frac{C}{2D}$.
For large enough $\alpha$, we have that $\Big|\jnconjt\Big(n\alpha\left(\frac{1}{2}+it\right) \Big)\Big|>\kappa$ and that $$\left|\jnconj\left(n\alpha\left(\frac{1}{2}+it\right) \right)\right|>\kappa\left|\frac{n\alpha}{2}+1+n\alpha t i\right|^2>\frac{n^2}{4}\kappa \alpha^2 >n\alpha.$$
It follows from \thref{nmodul} that $\jnconj (I)\subset \immR$. Hence, $I$ is a subset of $\immR$ or a preimage of this Fatou component. Moreover, for $z_{\pm}=\frac{n}{2}\alpha+in\alpha$ we have $|z_{\pm}|>n\alpha$. We conclude from \thref{nmodul} that $z_{\pm}\in \immR$. Therefore, $I \subset \immR$. By \thref{zeron}, we have that $\frac{n\alpha}{2}<z_0<n\alpha$. Therefore, there exists a piece-wise smooth curve $\Gamma= I\cup \gamma_1 \subset \immR$ such that $z_0 \in \textrm{Int}(\Gamma)$ and $0 \in \textrm{Ext}(\Gamma)$ (see Figure \ref{fig:teoremaB}). It follows that $\immR$ is multiply connected. By \thref{rem1}, it is infinitely connected. 
\end{proof}
\bibliography{bibliografia}
\bibliographystyle{amsalpha}
\end{document}